\title{On oriented alternating inverse monoids}
\author{
\textbf{V\'{\i}tor Hugo Fernandes}\\
Center for Mathematics and Applications (NOVA Math) 
and Department of Mathematics,  \\ 
Faculdade de Ci\^{e}ncias e Tecnologia, 
Universidade Nova de Lisboa, 
2829-516 Caparica, 
Portugal\\ 
E-mail: vhf@fct.unl.pt\\
ORCID iD: https://orcid.org/0000-0003-1057-4975\\ 
}
\newtheorem{theorem}{Theorem}[section]
\newtheorem{lemma}[theorem]{Lemma}
\newtheorem{proposition}[theorem]{Proposition}
\def\<{\leqslant}
\def\>{\geqslant}
\def\dom{\mathop{\mathrm{Dom}}\nolimits}
\def\im{\mathop{\mathrm{Im}}\nolimits}
\def\gd{\mathrm{d}}
\def\gi{\mathrm{i}}
\def\id{\mathrm{id}}
\def\Sym{\mathcal{S}}
\def\A{\mathcal{A}}
\def\POI{\mathcal{POI}}
\def\PMI{\mathcal{PMI}}
\def\POPI{\mathcal{POPI}}
\def\PORI{\mathcal{PORI}}
\def\I{\mathcal{I}}
\def\C{\mathcal{C}}
\def\D{\mathcal{D}}
\def\AI{\mathcal{AI}}
\def\AO{\mathcal{AO}} 
\def\AM{\mathcal{AM}} 
\def\AOP{\mathcal{AOP}} 
\def\AOR{\mathcal{AOR}} 
\def\con{\mathop{\mathrm{Con}}\nolimits} 
\newcommand{\transf}[1]{\left(\begin{smallmatrix} #1 \end{smallmatrix}\right)}
\renewcommand{\mod}[1]{\,(\mathrm{mod}{\,#1})}
\newcommand{\conpi}[1]{\uppi_{\mbox{$\!_{#1}$}}}
\newcommand{\contheta}[1]{\uptheta_{\mbox{$\!_{#1}$}}}
\newcommand{\rees}[1]{\sim_{\mbox{$\!_{#1}$}}} 
\newenvironment{proof}{\begin{trivlist}\item[\hskip%
\labelsep{\bf\em Proof.}]}%
{\qed\rm\end{trivlist}}
\newcommand{\qed}{{\unskip\nobreak
\hfil\penalty50\hskip .001pt \hbox{}
          \nobreak\hfil
         {\scriptsize$\Box$}
           \parfillskip=0pt\finalhyphendemerits=0\medbreak}}
\begin{document}

\maketitle 

\begin{abstract} 
In this paper, we consider the inverse submonoids $\AOR_n$ of oriented transformations and $\AOP_n$ of orientation-preserving transformations 
of the alternating inverse monoid $\AI_n$ on a chain with $n$ elements. We compute the cardinalities, describe the Green's structures and the congruences, and calculate the ranks of $\AOR_n$ and $\AOP_n$. 
\end{abstract}

\noindent{\small\it Keywords: \rm alternating partial permutations, oriented, orientation-preserving, congruences, rank.}  

\medskip 

\noindent{\small 2020 \it Mathematics subject classification: \rm 20M20, 20M18, 20M10.}  

\section*{Introduction and Preliminaries}\label{Int} 

Let $n$ be a positive integer and let $\Omega_n$ be a set with $n$ elements, e.g. $\Omega_n=\{1,2,\ldots,n\}$. 
We denote by $\Sym_n$  the \textit{symmetric group} on $\Omega_n$,
i.e. the group (under composition of mappings) of all permutations on $\Omega_n$, 
and by $\I_n$ the \textit{symmetric inverse monoid} on $\Omega_n$, i.e.
the inverse monoid (under composition of partial mappings) of all partial permutations (i.e. injective partial transformations) on $\Omega_n$.  

\smallskip 

Let $M$ be a monoid. The Green's equivalence relations $\mathcal{L}$, $\mathcal{R}$, $\mathcal{J}$ and $\mathcal{H}$ of $M$ are defined by
$a\mathcal{L}b$ if and only if $Ma=Mb$,  
$a\mathcal{R}b$ if and only if $aM=bM$,
$a\mathcal{J}b$ if and only if $MaM=MbM$ and 
$a\mathcal{H}b$ if and only if $a\mathcal{L}b$ and $a\mathcal{R}b$, for $a,b\in M$.  
Denote by $J_{a}^M$ the $\mathscr{J}$-class
of the element $a\in M$. As usual, a partial order relation
$\leqslant_\mathscr{J}$ is defined on the set $M/\mathscr{J}$ by
setting $J_{a}^M\leqslant_\mathscr{J}J_{b}^M$ if and only if 
$MaM\subseteq MbM$, for $a,b\in M$. We also 
write $J_{a}^M<_\mathscr{J}J_{b}^M$ if
and only if $J_a^M\leqslant_\mathscr{J}J_b^M$ and $(a, b)\not\in\mathscr{J}$, for $a, b\in M$. 
As usual, we also denote by $H_a^M$ the $\mathscr{H}$-class of an element $a\in M$. 

If $M$ is an inverse submonoid of $\I_n$,
it is well known that the Green's relations $\mathscr{L}$, $\mathscr{R}$ and $\mathscr{H}$ of $M$ can be described as following:
for $\alpha, \beta \in M$,
\begin{itemize}
\item $\alpha \mathscr{L} \beta$ if and only if $\im(\alpha) = \im(\beta)$,

\item $\alpha \mathscr{R} \beta$ if and only if $\dom(\alpha) = \dom(\beta)$, and

\item $\alpha \mathscr{H} \beta$ if and only if $\im(\alpha) = \im(\beta)$ and $\dom(\alpha) = \dom(\beta)$.
\end{itemize}
In particular for $\I_n$, we also have
\begin{itemize}
\item $\alpha \mathscr{J} \beta$ if and only if $|\dom(\alpha)| = |\dom(\beta)|$ (if and only if $|\im(\alpha)| = |\im(\beta)|$).
\end{itemize}
Let $J_k^{\I_n}=\{\alpha\in\I_n\mid |\im(\alpha)|=k\}$, for $0\leqslant k\leqslant n$. 
Then, we have $|J_k^{\I_n}|=\binom{n}{k}^2k!$, for $0\leqslant k\leqslant n$, and 
$
\I_n/\mathscr{J}=\{J_0^{\I_n}<_{\mathscr{J}}J_1^{\I_n}<_{\mathscr{J}}\cdots<_{\mathscr{J}}J_n^{\I_n}\}. 
$
Observe that $J_0^{\I_n}=\{\emptyset\}$ and $J_n^{\I_n}=\Sym_n$. 

\smallskip 

Let $G$ be a subgroup of $\Sym_n$ and let $\I_n(G)=\{\alpha\in\I_n\mid \mbox{$\alpha=\sigma|_{\dom(\alpha)}$, for some $\sigma\in G$}\}$.  
It is clear that $\I_n(G)$ is an inverse submonoid of $\I_n$ containing the semilattice $\mathcal{E}_n$ of all idempotents of $\I_n$ and with $G$ as group of units. 
For $G=\Sym_n$, $G=\A_n$ and $G=\{\id_n\}$,
where $\A_n$ denotes the \textit{alternating group} on $\Omega_n$ and $\id_n$ is the identity transformation of $\Omega_n$, 
we obtain important and well-known inverse submonoids of $\I_n$. In fact, on one hand, it is clear that 
$\I_n(\Sym_n)=\I_n$ and $\I_n(\{\id_n\})=\mathcal{E}_n$. 
On the other hand, we have 
$\I_n(\A_n)=\AI_n$, the \textit{alternating semigroup}; see \cite{Fernandes:2024sub}. 
Furthermore, the monoids $\I_n(\C_n)$ and $\I_n(\mathcal{D}_{2n})$, where $\C_n$ is a cyclic subgroup of $\Sym_n$ of order $n$ 
and $\mathcal{D}_{2n}$ is a dihedral subgroup of $\Sym_n$ of order $2n$,  
were studied in \cite{Fernandes:2024} and \cite{Fernandes&Paulista:2023}, respectively.  

\smallskip 

Next, we recall the structure of the alternating semigroup $\AI_n$ (denoted by $\A^c_n$ in \cite{Lipscomb:1996}).  
For $\alpha\in J_{n-1}^{\I_n}$, let us denote by $\overline{\alpha}$ the \textit{completion} of $\alpha$, 
i.e. $\overline{\alpha}$ is the unique permutation of 
$\Sym_n$ such that $\overline{\alpha}|_{\dom(\alpha)}=\alpha$. Let 
$J_{n-1}^{\A_n}=\{\alpha\in J_{n-1}^{\I_n}\mid \overline{\alpha}\in\A_n\}$. 
Notice that $|J_{n-1}^{\A_n}|=\frac{1}{2}n!n=\frac{1}{2}|J_{n-1}^{\I_n}|$. Then, we have 
$$
\AI_n=\A_n\cup J_{n-1}^{\A_n}\cup J_{n-2}^{\I_n}\cup\cdots\cup J_1^{\I_n}\cup J_0^{\I_n}
$$
(see \cite[Theorems 25.1 and 25.2]{Lipscomb:1996}) and so $|\AI_n|=\frac{1}{2}n!+\frac{1}{2}n!n+\sum_{k=0}^{n-2}\binom{n}{k}^2k!$. 

\medskip 

From now on, we will consider $\Omega_n$ as a chain, e.g. $\Omega_n=\{1<2<\cdots<n\}$. 
Let $\alpha\in\I_n$. 
We say that $\alpha$ is \textit{order-preserving}
[respectively, \textit{order-reversing}] if $x\leqslant y$ implies $x\alpha\leqslant y\alpha$
[respectively, $x\alpha\geqslant y\alpha$], for all $x,y \in \dom(\alpha)$.
A partial permutation is said to be \textit{monotone} if it is order-preserving or order-reversing.  
Suppose that $\dom(\alpha)=\{a_1<\cdots<a_t\}$, with $t\geqslant0$. 
We say that $\alpha$ is \textit{orientation-preserving}
[respectively, \textit{orientation-reversing}, \textit{oriented}]
if there exists no more than one index $i\in\{1,\ldots,t\}$ such that
$a_i\alpha >a_{i+1}\alpha$ [respectively, $a_i\alpha<a_{i+1}\alpha$],
where $a_{t+1}$ denotes $a_1$. 
Let us denote by $\POI_n$, $\PMI_n$, $\POPI_n$ and $\PORI_n$ 
the inverse submonoids of $\I_n$ 
of all order-preserving partial permutations, 
of all monotone partial permutations, 
of all orientation-preserving partial  permutations and 
of all oriented partial permutations, respectively.
Semigroups of monotone, order-preserving, orientation-preserving and oriented transformations 
have been studied massively in recent decades. 
See, for example, \cite{Dimitrova&Fernandes&Koppitz&Quinteiro:2023,Fernandes:2000,Fernandes:2001} and their references.  

\smallskip 

In \cite{Fernandes:2024sub}, the author considered the inverse submonoids $\AM_n=\AI_n\cap\PMI_n$ of monotone transformations 
and $\AO_n=\AI_n\cap\POI_n$ of order-preserving transformations of the alternating inverse monoid $\AI_n$. 
In that work, Fernandes computed the cardinalities, describe the Green's structures and the congruences, 
and calculate the ranks of $\AM_n$ and $\AO_n$.  
In this paper, we aim to carry out a similar study with regard to the inverse submonoids $\AOR_n=\AI_n\cap\PORI_n$ of oriented transformations 
and $\AOP_n=\AI_n\cap\POPI_n$ of orientation-preserving transformations of the alternating inverse monoid $\AI_n$. 
In Sections \ref{AOP} and \ref{AOR}, we study the structure of the monoids $\AOP_n$ and $\AOR_n$, respectively. Complete descriptions of the congruences of $\AOP_n$ and $\AOR_n$ are given in Section \ref{congruences}. Finally, in Sections \ref{rAOP} and \ref{rAOR}, we compute the ranks of $\AOP_n$ and $\AOR_n$, respectively. 

\smallskip 

We end this section by recalling some notions and by introducing some notation that we will use in the following sections.  

By \textit{rank} of a (finite) monoid $M$, we mean the minimum size of a generating set of $M$, i.e. 
the minimum of the set $\{|X|\mid \mbox{$X\subseteq M$ and $X$ generates $M$}\}$. 
As usual, we also use the term \textit{rank} relative to a transformation $\alpha$ of $\Omega_n$ to mean the number $|\im(\alpha)|$. 

An \textit{ideal} of $M$ is a subset $I$ of $M$ such that $MIM\subseteq I$.  
The \textit{Rees congruence} of $M$ associated to an ideal $I$ of $M$ 
is the congruence $\rees{I}$ is defined by $a\rees{I}b$ if and only if $a=b$ or
$a,b\in I$, for $a, b\in M$. 
We denote by $\omega$ the universal congruences of $M$, respectively. 
The congruence \textit{lattice} (for the inclusion order relation) of $M$ is denoted by $\con(M)$. 

For $\alpha\in J_{n-1}^{\I_n}$, define $\gd(\alpha)$ and $\gi(\alpha)$ as the \textit{gaps} of $\dom(\alpha)$ and $\im(\alpha)$, respectively, 
i.e. $\{\gd(\alpha)\}=\Omega_n\setminus\dom(\alpha)$ and $\{\gi(\alpha)\}=\Omega_n\setminus\im(\alpha)$.  

For subsets $A$ and $B$ of $\Omega_n$ such that $|A|=|B|$, 
let $\transf{A\\B}$ denote any transformation $\alpha\in\I_n$ such that $A=\dom(\alpha)$ and $B=\im(\alpha)$. 
Observe that, for any subsets $A$ and $B$ of $\Omega_n$ such that $|A|=|B|$, 
there exists a unique order-preserving partial permutation $\alpha$ of $\Omega_n$ such that $\alpha=\transf{A\\B}$. 

For a nonempty subset $X$ of $\Omega_n$, let us denote by $\id_X$ the partial identity $\id_n|_X$. 

\smallskip 

For general background on Semigroup Theory and standard notations, we refer to Howie's book \cite{Howie:1995}.

\smallskip 

In this paper, from now on, we will always consider $n\geqslant 3$. 

\section{The structure of $\AOP_n$} \label{AOP} 

Let $\C_n$ be the cyclic subgroup of $\Sym_n$ of order $n$ generated by the $n$-cycle
$$
g=\transf{1&2&\cdots&n-1&n\\2&3&\cdots&n&1} = \textstyle (1\:2\:\cdots\:n). 
$$

Recall that $\POPI_n/\mathscr{J}=\{J_0^{\POPI_n}<_{\mathscr{J}}J_1^{\POPI_n}<_{\mathscr{J}}\cdots<_{\mathscr{J}}J_n^{\POPI_n}\}$, 
where $J_k^{\POPI_n}=J_k^{\I_n}\cap\POPI_n$, for $0\leqslant k\leqslant n$. 
In this case, we have $|J_k^{\POPI_n}|=k\binom{n}{k}^2$, for $1\leqslant k\leqslant n$, with 
$J_0^{\POPI_n}=\{\emptyset\}$ and $J_n^{\POPI_n}=\C_n$. 
Moreover, the maximal subgroups of $J_k^{\POPI_n}$ are cyclic of order $k$,  for $1\leqslant k\leqslant n$. 
For more details, see \cite{Fernandes:2000}. 

Let $J_n=J_n^{\POPI_n}\cap\AOP_n=\C_n\cap\A_n$ (i.e. $J_n$ is the group of units of $\AOP_n$ and so also a $\mathscr{J}$-class and a $\mathscr{H}$-class). 
Noticing that $g=(1\:2)(1\:3)\cdots(1\:n)$, i.e. $g$ decomposes into $n-1$ transpositions, we can immediately conclude that, if $n$ is odd, then $\C_n\subseteq\A_n$. 
This same decomposition of $g$ also allows us to deduce that, if $n$ is even, then $g^k\in\A_n$ if and only if $k$ is even, for $k\>0$, and so, in this case, 
we get $\C_n\cap\A_n= \{g^{2k}\mid 0\< k\<\frac{n}{2}-1\}=\langle g^2\rangle$. 
Thus, we have
\begin{equation}\label{jn}
J_n=\left\{
\begin{array}{ll}
\C_n & \mbox{if $n$ is odd}\\
\langle g^2\rangle=\{g^{2k}\mid 0\< k\<\frac{n}{2}-1\} & \mbox{if $n$ is even}. 
\end{array}
\right.
\end{equation} 
On the other hand, obviously 
$$
\{\alpha\in\AOP_n\mid |\im(\alpha)|\leqslant n-2\}=\{\alpha\in\POPI_n\mid |\im(\alpha)|\leqslant n-2\}=J_{n-2}^{\POPI_n}\cup\cdots\cup J_1^{\POPI_n}\cup J_0^{\POPI_n}.
$$
Hence, it remains to characterize the elements of $\AOP_n$ with rank $n-1$, which we are going to do next. 

First, we recall the following two results proved in \cite{Fernandes:2024sub}. 

\begin{lemma}[{\cite[Proposition 1.1]{Fernandes:2024sub}}]\label{chAO}
Let $\alpha\in J_{n-1}^{\I_n}\cap\POI_n$. Then, $\alpha\in\AO_n$ if and only if $\gd(\alpha)$ and $\gi(\alpha)$ have the same parity. 
\end{lemma}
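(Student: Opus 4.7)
The plan is to exploit the fact that $\alpha$, being order-preserving of rank $n-1$, is uniquely determined by its pair of gaps $d = \gd(\alpha)$ and $i = \gi(\alpha)$, and to read off its completion $\overline{\alpha}$ explicitly as a single cycle whose length encodes the parity condition. Recall that, by the description of $\AI_n$ given above, for any $\alpha \in J_{n-1}^{\I_n}$ the membership $\alpha \in \AI_n$ is equivalent to $\overline{\alpha} \in \A_n$; since we are already assuming $\alpha \in \POI_n$, the task reduces to deciding when $\overline{\alpha}$ is an even permutation.

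First I would dispose of the trivial case $d = i$: then $\alpha = \id_{\Omega_n \setminus \{d\}}$, so $\overline{\alpha} = \id_n \in \A_n$, and $d$ and $i$ trivially have the same parity. Assuming now $d < i$, order-preservation forces $\alpha$ to fix every element of $\{1, \ldots, d-1\} \cup \{i+1, \ldots, n\}$ and to send $k \mapsto k-1$ for each $k \in \{d+1, \ldots, i\}$. The completion $\overline{\alpha}$ therefore has only one admissible value at $d$, namely the missing image $i$, and tracing the resulting orbit shows that $\overline{\alpha}$ is the single cycle $(d\; i\; i-1\; \cdots\; d+1)$ of length $i - d + 1$. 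The case $d > i$ is handled symmetrically, yielding the inverse cycle of the same length $|d-i|+1$. Since a cycle of length $m$ has sign $(-1)^{m-1}$, one concludes that $\overline{\alpha} \in \A_n$ iff $|d-i|+1$ is odd iff $d$ and $i$ share the same parity.

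The main obstacle is modest: the argument is essentially bookkeeping, and the only real care needed is in fixing the direction of the shift correctly so as to identify $\overline{\alpha}$ as a cycle of length exactly $|d-i|+1$ and in handling the two cases $d < i$ and $d > i$ uniformly. Once the explicit description of $\overline{\alpha}$ is in hand, the parity conclusion is immediate from the sign formula for cycles.
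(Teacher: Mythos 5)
Your argument is correct. Note, however, that the paper does not prove this lemma at all: it is quoted verbatim from \cite[Proposition 1.1]{Fernandes:2024sub}, so there is no internal proof to compare against. Your computation supplies a valid self-contained justification: the unique order-preserving $\alpha=\transf{\Omega_n\setminus\{d\}\\ \Omega_n\setminus\{i\}}$ does fix everything outside the interval between $d$ and $i$ and shifts the interval by one, its completion $\overline{\alpha}$ is the cycle on $\{d,\ldots,i\}$ (or $\{i,\ldots,d\}$) of length $|d-i|+1$, and the sign $(-1)^{|d-i|}$ is $+1$ exactly when $\gd(\alpha)$ and $\gi(\alpha)$ have the same parity, which combined with the characterization $J_{n-1}^{\A_n}=\{\alpha\in J_{n-1}^{\I_n}\mid \overline{\alpha}\in\A_n\}$ gives the claim.
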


\begin{lemma}[{\cite[Lemma 2.1]{Fernandes:2024sub}}]\label{-ab}
Let $\alpha,\beta\in J_{n-1}^{\I_n}$ be such that $\alpha\beta\in J_{n-1}^{\I_n}$. 
Then, $\overline{\alpha\beta}=\overline\alpha\overline\beta$. 
\end{lemma}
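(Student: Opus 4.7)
The plan is to show that the two permutations $\overline{\alpha\beta}$ and $\overline{\alpha}\overline{\beta}$ of $\Omega_n$ coincide pointwise. The crux is to exploit the hypothesis $\alpha\beta\in J_{n-1}^{\I_n}$ to pin down how the gaps of $\alpha$ and $\beta$ must align; after that, agreement on all but one element is an immediate unravelling of definitions, and agreement on the last element follows because both sides are permutations of the same set.

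First I would set $a=\gd(\alpha)$, $b=\gi(\alpha)$, $c=\gd(\beta)$, $d=\gi(\beta)$ and compute $\dom(\alpha\beta)$ via the standard formula $\dom(\alpha\beta)=\{x\in\dom(\alpha)\mid x\alpha\in\dom(\beta)\}$. If $b\ne c$, then there is a unique $y\in\dom(\alpha)$ with $y\alpha=c$; this $y$ is removed when forming $\alpha\beta$, giving $|\dom(\alpha\beta)|=n-2$, contradicting the hypothesis. Hence $b=c$, which forces $\im(\alpha)=\dom(\beta)$, so $\dom(\alpha\beta)=\Omega_n\setminus\{a\}$ and $\im(\alpha\beta)=\im(\beta)=\Omega_n\setminus\{d\}$.

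Next, for any $x\in\Omega_n\setminus\{a\}$, the definition of completion gives $x\overline{\alpha\beta}=x(\alpha\beta)$, while $x\overline{\alpha}=x\alpha\in\im(\alpha)=\dom(\beta)$ yields $x\overline{\alpha}\,\overline{\beta}=(x\alpha)\beta=x(\alpha\beta)$. Thus the two permutations agree on $\Omega_n\setminus\{a\}$, and being bijections of $\Omega_n$ they must send $a$ to the unique remaining element, namely $d$; this can also be verified directly from $a\overline{\alpha}=b=c$ and $c\overline{\beta}=d$. This gives the required equality $\overline{\alpha\beta}=\overline{\alpha}\overline{\beta}$. I do not anticipate any substantive obstacle: the entire argument is bookkeeping with gaps, the only genuine insight being the observation that the rank condition on $\alpha\beta$ is equivalent to the gap alignment $\gi(\alpha)=\gd(\beta)$.
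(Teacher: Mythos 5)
Your proof is correct and complete: the reduction of the rank hypothesis to the gap alignment $\gi(\alpha)=\gd(\beta)$, the pointwise agreement on $\dom(\alpha\beta)$, and the bijection argument for the remaining point are all sound. Note that this paper does not actually prove the lemma --- it is imported verbatim from the reference \cite{Fernandes:2024sub} (``Lemma 2.1'' there) --- so there is no in-paper argument to compare against; your argument is the natural one and would serve as a self-contained proof.
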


Next, for $\alpha\in J_{n-1}^{\I_n}$, define $\alpha_L=\transf{\{1,\ldots,n-1\}\\\dom(\alpha)}\in\POI_n$, 
$\alpha_R=\transf{\im(\alpha)\\\{1,\ldots,n-1\}}\in\POI_n$ and $\widehat\alpha=\alpha_L\alpha\alpha_R$. 
Observe that, $\alpha_L\in\AO_n$ [respectively, $\alpha_R\in\AO_n$] if and only if $\gd(\alpha)$ [respectively, $\gi(\alpha)$] has the same parity as $n$. 
Define also 
$$
g_n=\transf{1&2&\cdots&n-2&n-1\\2&3&\cdots&n-1&1}.
$$
Notice that, $g_n\in J_{n-1}^{\POPI_n}$ and $\overline{g}_n=(1\:2)\cdots(1\:{n\!-\!1})$. 
Thus, by a reasoning analogous to the one above, we may conclude that 
\begin{equation}\label{gn}
\langle \overline{g}_n \rangle\cap\A_n =\left\{
\begin{array}{ll}
\langle \overline{g}_n \rangle& \mbox{if $n$ is even}\\
\langle \overline{g}_n^2 \rangle=\{\overline{g}_n^{2k}\mid 0\< k\<\frac{n-1}{2}-1\} & \mbox{if $n$ is odd}. 
\end{array}
\right.
\end{equation} 
Observe also that, if $\alpha\in J_{n-1}^{\POPI_n}$, then $\widehat\alpha\in\langle g_n\rangle$ and so, by (\ref{gn}) and Lemma \ref{-ab}, 
$\widehat\alpha\in\AOP_n$, if $n$ is even, and $\widehat\alpha\in\AOP_n$ if and only if $1\widehat\alpha$ is odd, if $n$ is odd. 

Now, we can show a description of the elements of $\AOP_n$ with rank $n-1$. 

\begin{proposition}\label{chAOP}
Let $\alpha\in J_{n-1}^{\POPI_n}$. Then:
\begin{enumerate}
\item For $n$ even, $\alpha\in\AOP_n$ if and only if $\gd(\alpha)$ and $\gi(\alpha)$ have the same parity;
\item For $n$ odd,  $\alpha\in\AOP_n$ if and only if 
either $\gd(\alpha)$ and $\gi(\alpha)$ have the same parity and $1\widehat\alpha$ is odd 
or $\gd(\alpha)$ and $\gi(\alpha)$ have distinct parities and $1\widehat\alpha$ is even.  
\end{enumerate} 
\end{proposition}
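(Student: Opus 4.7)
The plan is to use the factorisation $\widehat\alpha=\alpha_L\alpha\alpha_R$ to reduce the question to a parity count on three independent factors. Since $\alpha,\alpha_L,\alpha_R,\alpha_L\alpha,\widehat\alpha$ all lie in $J_{n-1}^{\I_n}$ (the domain/image computations match at each step), two applications of Lemma \ref{-ab} give
$$\overline{\widehat\alpha}=\overline{\alpha_L}\cdot\overline\alpha\cdot\overline{\alpha_R}.$$
Inverting, and using that a permutation and its inverse share their parity, I obtain that $\overline\alpha\in\A_n$ if and only if $\overline{\alpha_L}\cdot\overline{\widehat\alpha}\cdot\overline{\alpha_R}\in\A_n$, i.e.\ if and only if an even number of the three completions $\overline{\alpha_L},\overline{\widehat\alpha},\overline{\alpha_R}$ are odd permutations.

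Next I would compute each parity explicitly. Both $\alpha_L$ and $\alpha_R$ are order-preserving, with $\gd(\alpha_L)=n$, $\gi(\alpha_L)=\gd(\alpha)$, $\gd(\alpha_R)=\gi(\alpha)$ and $\gi(\alpha_R)=n$. Lemma \ref{chAO} therefore tells me that $\overline{\alpha_L}$ is even iff $n$ and $\gd(\alpha)$ share parity, and $\overline{\alpha_R}$ is even iff $n$ and $\gi(\alpha)$ share parity. The parity of $\overline{\widehat\alpha}$ is controlled by (\ref{gn}): for $n$ even, $\widehat\alpha\in\langle g_n\rangle\subseteq\AOP_n$, so $\overline{\widehat\alpha}$ is always even; for $n$ odd, the remark preceding the proposition gives $\overline{\widehat\alpha}\in\A_n$ iff $1\widehat\alpha$ is odd.

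Finally I would assemble the two cases. For $n$ even, $\overline{\widehat\alpha}$ contributes nothing, so the condition collapses to "$\overline{\alpha_L}$ and $\overline{\alpha_R}$ have the same parity", and since "sharing parity with $n$" is the same for both, this is equivalent to $\gd(\alpha)$ and $\gi(\alpha)$ having the same parity — yielding (1). For $n$ odd, "sharing parity with $n$" means "being odd", so $\overline{\alpha_L}\overline{\alpha_R}$ is even precisely when $\gd(\alpha)$ and $\gi(\alpha)$ have the same parity. Requiring the triple product $\overline{\alpha_L}\,\overline{\widehat\alpha}\,\overline{\alpha_R}$ to be even therefore splits as: either $\gd(\alpha)$ and $\gi(\alpha)$ have the same parity and $\overline{\widehat\alpha}$ is even (i.e.\ $1\widehat\alpha$ odd), or they have distinct parities and $\overline{\widehat\alpha}$ is odd (i.e.\ $1\widehat\alpha$ even). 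This is exactly (2).

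The main difficulty is not conceptual but clerical: one must keep straight the three separate parities and translate the "same parity as $n$" conditions coming from Lemma \ref{chAO} into the correct conditions on $\gd(\alpha)$ and $\gi(\alpha)$ for each parity of $n$, then combine them with the parity of $\overline{\widehat\alpha}$ without sign errors.
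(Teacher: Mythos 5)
Your proof is correct and follows essentially the same route as the paper: both rest on the identity $\overline{\alpha}=\overline{\alpha}_L^{-1}\overline{\widehat\alpha}\,\overline{\alpha}_R^{-1}$ obtained from Lemma \ref{-ab}, the parity of $\overline{\alpha}_L,\overline{\alpha}_R$ read off from their gaps, and the parity of $\overline{\widehat\alpha}$ from (\ref{gn}). The only cosmetic difference is that you phrase the conclusion as ``an even number of the three completions are odd'' while the paper runs the same bookkeeping through a chain of biconditionals.
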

\begin{proof}
We begin by observing that $\alpha=\alpha_L^{-1}\widehat\alpha\alpha_R^{-1}$ and so, by Lemma \ref{-ab}, we have 
$\overline{\alpha}=\overline{\alpha}_L^{-1}\overline{\widehat\alpha}\overline{\alpha}_R^{-1}$. 
Observe also that $\gd(\alpha)$ and $\gi(\alpha)$ have the same parity 
if and only if $\overline{\alpha}_L,\overline{\alpha}_R\in\A_n$ or $\overline{\alpha}_L,\overline{\alpha}_R\notin\A_n$. 

First, suppose that $n$ is even. Then, $\overline{\widehat\alpha}\in\A_n$ and so 
$$
\begin{array}{rcl}
\alpha\in\AOP_n & \Leftrightarrow & \overline{\alpha}\in\A_n \\
& \Leftrightarrow & \mbox{$\overline{\alpha}_L^{-1}\in\A_n$ if and only if $\overline{\alpha}_R^{-1}\in\A_n$}\\
& \Leftrightarrow & \mbox{$\overline{\alpha}_L\in\A_n$ if and only if $\overline{\alpha}_R\in\A_n$}\\
& \Leftrightarrow & \mbox{$\gd(\alpha)$ and $\gi(\alpha)$ have the same parity.}\\
\end{array}
$$

Now, suppose that $n$ is odd. 
If $\gd(\alpha)$ and $\gi(\alpha)$ have the same parity, 
then 
$$
\alpha\in\AOP_n \quad \Leftrightarrow \quad \overline{\alpha}\in\A_n \quad \Leftrightarrow \quad \overline{\widehat\alpha}\in\A_n 
\quad \Leftrightarrow \quad \mbox{$1\widehat\alpha$ is odd}. 
$$ 
On the other hand, 
if $\gd(\alpha)$ and $\gi(\alpha)$ have the distinct parities (i.e. $\overline{\alpha}_L\in\A_n$ if and only if $\overline{\alpha}_R\notin\A_n$), 
then 
$$
\alpha\in\AOP_n \quad \Leftrightarrow \quad \overline{\alpha}\in\A_n \quad \Leftrightarrow \quad \overline{\widehat\alpha}\notin\A_n 
\quad \Leftrightarrow \quad \mbox{$1\widehat\alpha$ is even},  
$$ 
as required.
\end{proof} 

Observe that, in $\Omega_n$, we have 
$\frac{n}{2}$ even numbers and $\frac{n}{2}$ odd numbers, if $n$ is even, and 
$\frac{n-1}{2}$ even numbers and $\frac{n+1}{2}$ odd numbers, if $n$ is odd. 
Then, for $n$ even, we get $\frac{1}{2}n^2$ pairs with the same parity 
and, for $n$ odd, we obtain $\frac{n^2+1}{2}$ pairs with the same parity and $\frac{n^2-1}{2}$ pairs with distinct parities. 

Let $J_{n-1}=J_{n-1}^{\POPI_n}\cap\AOP_n$. 

Let us suppose that $n$ is even. Let $\alpha\in J_{n-1}$. Then, by Proposition \ref{chAOP}, $(-1)^{\gd(\alpha)}=(-1)^{\gi(\alpha)}$. 
On the other hand, if $\beta$ is one of the $n-1$ elements of $\POPI_n$ such that $\dom(\beta)=\dom(\alpha)$ and $\im(\beta)=\im(\alpha)$ 
(i.e. $\beta\mathscr{H}\alpha$ in $\POPI_n$), then $\gd(\beta)=\gd(\alpha)$ and $\gi(\beta)=\gi(\alpha)$ and so $\beta\in J_{n-1}$. 
Then, as we have $\frac{1}{2}n^2$ pairs with the same parity, it follows that 
$
|J_{n-1}|= \frac{n^2(n-1)}{2}. 
$

Next, suppose that $n$ is odd. Let $\alpha\in J_{n-1}^{\POPI_n}$. Then, as clearly the mapping 
$$
\begin{array}{ccc}
\{\beta\in\POPI_n\mid \beta\mathscr{H}\alpha\}&\longrightarrow&  \langle g_n\rangle = \{\gamma\in\POPI_n\mid \gamma\mathscr{H} g_n\}  \\
\beta & \longmapsto & \widehat\beta
\end{array}
$$
is a bijection and 
$$
|\{\gamma\in\langle g_n\rangle \mid \mbox{$1\gamma$ is odd}\}| = |\{\gamma\in\langle g_n\rangle \mid \mbox{$1\gamma$ is even}\}| = \textstyle \frac{n-1}{2}, 
$$
we have 
$$
|\{\beta\in\POPI_n\mid \mbox{$\beta\mathscr{H}\alpha$ and $1\widehat\beta$ is odd}\}| = 
|\{\beta\in\POPI_n\mid \mbox{$\beta\mathscr{H}\alpha$ and $1\widehat\beta$ is even}\}| = \textstyle \frac{n-1}{2}.  
$$
Since $J_{n-1}^{\POPI_n}$ contains $n^2$ $\mathscr{H}$-classes, by Proposition \ref{chAOP}, it follows again that 
$
|J_{n-1}|= \frac{n^2(n-1)}{2}. 
$

Therefore, as $|\POPI_n|=1+\frac{n}{2}\binom{2n}{n}$, given the previous calculations and (\ref{jn}), we obtain: 

\begin{proposition}\label{carAOP}
$|\AOP_n| = \left\{
\begin{array}{ll}
\frac{n}{2}\binom{2n}{n} -  \frac{n^2(n-1)}{2} +1 & \mbox{if $n$ is odd}\\ \\ 
\frac{n}{2}\binom{2n}{n} -  \frac{n^2(n-1)+n}{2} +1 & \mbox{if $n$ is even}. 
\end{array}
\right.
$
\end{proposition}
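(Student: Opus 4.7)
The plan is an elementary cardinality bookkeeping argument using the $\mathscr{J}$-class decomposition already established. Since every element of $\AOP_n$ has rank at most $n$, and since ranks strictly less than $n-1$ impose no additional constraint beyond being in $\POPI_n$ (because $J_k^{\I_n}\cap\POPI_n\subseteq\AOP_n$ for $k\leqslant n-2$, as observed right before Proposition~\ref{chAOP}), I would write
\begin{equation*}
\AOP_n \;=\; J_n \;\cup\; J_{n-1} \;\cup\; \bigcup_{k=0}^{n-2} J_k^{\POPI_n},
\end{equation*}
a disjoint union, and compute $|\AOP_n|$ as the sum of the three contributions.

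The first contribution, $|J_n|$, is read off directly from~(\ref{jn}): it equals $n$ when $n$ is odd and $n/2$ when $n$ is even. The second contribution, $|J_{n-1}|=\frac{n^2(n-1)}{2}$, has just been computed in the two paragraphs immediately preceding the statement (by the $\mathscr{H}$-class counting argument using Proposition~\ref{chAOP}). For the bulk sum $\sum_{k=0}^{n-2}|J_k^{\POPI_n}|$, rather than summing $k\binom{n}{k}^2$ directly, I would exploit the known total $|\POPI_n|=1+\frac{n}{2}\binom{2n}{n}$ and subtract the two top $\mathscr{J}$-classes of $\POPI_n$, namely $|J_n^{\POPI_n}|=|\C_n|=n$ and $|J_{n-1}^{\POPI_n}|=(n-1)\binom{n}{n-1}^2=n^2(n-1)$. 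This yields
\begin{equation*}
\sum_{k=0}^{n-2}|J_k^{\POPI_n}| \;=\; 1+\tfrac{n}{2}\tbinom{2n}{n}-n^2(n-1)-n.
\end{equation*}

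Finally, adding the three pieces and simplifying splits into the two parities. For $n$ odd, the $n$ from $|J_n|$ cancels the $-n$ from the bulk sum, and combining with $\frac{n^2(n-1)}{2}-n^2(n-1)=-\frac{n^2(n-1)}{2}$ produces $\frac{n}{2}\binom{2n}{n}-\frac{n^2(n-1)}{2}+1$. For $n$ even, only $n/2$ of the $n$ cancels, leaving an extra $-n/2$ which merges with $-\frac{n^2(n-1)}{2}$ into $-\frac{n^2(n-1)+n}{2}$. There is no real obstacle here; the only thing to be careful about is using the correct value of $|J_n|$ per parity and not double-counting any $\mathscr{J}$-class. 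The substantive content (the characterisation in Proposition~\ref{chAOP} and the count $|J_{n-1}|=\frac{n^2(n-1)}{2}$) has already been dispatched, so the proposition itself is just arithmetic.
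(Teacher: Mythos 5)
Your proposal is correct and follows essentially the same route as the paper: the author likewise obtains the count by combining $|\POPI_n|=1+\frac{n}{2}\binom{2n}{n}$ with the values of $|J_n|$ from (\ref{jn}) and $|J_{n-1}|=\frac{n^2(n-1)}{2}$ computed just before the proposition, the lower $\mathscr{J}$-classes being common to $\AOP_n$ and $\POPI_n$. Your arithmetic in both parities checks out.
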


For $0\leqslant k\leqslant n-2$, let us denote $J_k^{\POPI_n}$ simply by $J_k$. It is easy to conclude that $J_0,J_1,\ldots,J_{n-2}$ 
are also $\mathscr{J}$-classes of $\AOP_n$. Moreover, we have $J_0<_\mathscr{J}J_1<_\mathscr{J}\cdots<_\mathscr{J}J_{n-2}<_\mathscr{J}J_n$  in $\AOP_n$. 
Regarding the elements of $\AOP_n$ with rank $n-1$, we first prove two lemmas. 

\begin{lemma}\label{jn-1o}
If $n$ is an odd number, then $J_{n-1}=J_n\langle g_n^2\rangle J_n$. 
\end{lemma}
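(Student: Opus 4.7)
The plan is to prove the two inclusions $J_n\langle g_n^2\rangle J_n \subseteq J_{n-1}$ and $J_{n-1} \subseteq J_n\langle g_n^2\rangle J_n$ separately. For the forward inclusion, since $n$ is odd, $J_n = \C_n \subseteq \A_n$ by (\ref{jn}), so $J_n \subseteq \AOP_n$, while $\langle g_n^2\rangle \subseteq \AOP_n$ follows from Lemma \ref{-ab} together with (\ref{gn}). Because $\AOP_n$ is a monoid, any product $\sigma\gamma\tau$ with $\sigma,\tau \in J_n$ and $\gamma \in \langle g_n^2\rangle$ again lies in $\AOP_n$, and since $\sigma,\tau$ are permutations of $\Omega_n$, multiplying by them preserves rank, so $\sigma\gamma\tau$ has rank $n-1$ and hence belongs to $J_{n-1}$.

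For the reverse inclusion, my plan is, given $\alpha \in J_{n-1}$, to explicitly peel off the two ``ends'' of $\alpha$ using elements of $\C_n$, reducing to an element of $\langle g_n^2\rangle$. Since $\C_n$ acts regularly on $\Omega_n$, there exist unique $\sigma,\tau \in \C_n$ with $n\sigma^{-1} = \gd(\alpha)$ and $n\tau = \gi(\alpha)$; I then set $\gamma := \sigma^{-1}\alpha\tau^{-1}$. A routine computation of domain and image shows $\dom(\gamma) = \im(\gamma) = \{1,\ldots,n-1\}$, and since $\POPI_n$ is closed under composition, $\gamma$ is orientation-preserving, so $\gamma \in \langle g_n\rangle$. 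Because $\sigma,\tau \in \C_n \subseteq \A_n$ for $n$ odd, we have $\sigma^{-1},\tau^{-1} \in \AOP_n$; since $\AOP_n$ is an inverse monoid containing $\alpha$, this forces $\gamma \in \AOP_n$. Combining, $\gamma \in \langle g_n\rangle \cap \AOP_n = \langle g_n^2\rangle$ by (\ref{gn}), and the identity $\sigma\gamma\tau = \sigma\sigma^{-1}\alpha\tau^{-1}\tau = \alpha$ closes the argument.

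The main obstacle is ensuring that the constructed $\gamma$ really lands in $\langle g_n^2\rangle$ and not merely in the strictly larger cyclic monoid $\langle g_n\rangle$. This is exactly the point at which the parity identification (\ref{gn}), derived via Lemma \ref{-ab}, becomes indispensable: it is the bridge between the global condition ``lies in $\AOP_n$'' and the concrete combinatorial statement ``is an even power of $g_n$''. Once that bridge is crossed through the inverse-monoid closure of $\AOP_n$, the rest is essentially bookkeeping on the regular action of $\C_n$ on $\Omega_n$.
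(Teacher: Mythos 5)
Your proof is correct, and for the nontrivial inclusion it takes a genuinely different route from the paper. The forward inclusion $J_n\langle g_n^2\rangle J_n\subseteq J_{n-1}$ is handled identically in both arguments (closure of $\AOP_n$ under products plus preservation of rank by the units). For the reverse inclusion, the paper parametrizes the products $g^rg_n^{2k}g^s$, computes their domains and images, and concludes by a counting argument that $|J_n\langle g_n^2\rangle J_n|=n\cdot\frac{n-1}{2}\cdot n=|J_{n-1}|$; this crucially leans on the cardinality $|J_{n-1}|=\frac{n^2(n-1)}{2}$ established just before the lemma via Proposition \ref{chAOP}. You instead give a direct constructive decomposition: using the regular action of $\C_n$ you normalize $\alpha$ to $\gamma=\sigma^{-1}\alpha\tau^{-1}$ with $\dom(\gamma)=\im(\gamma)=\{1,\ldots,n-1\}$, place $\gamma$ in the group $\mathscr{H}$-class $\langle g_n\rangle$ of $\POPI_n$, and then use closure of $\AOP_n$ together with $\langle g_n\rangle\cap\AOP_n=\langle g_n^2\rangle$ (which, as you note, is where (\ref{gn}) and Lemma \ref{-ab} enter) to land in $\langle g_n^2\rangle$. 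Your argument is self-contained in the sense that it does not presuppose the count of $|J_{n-1}|$, and it exhibits the factorization explicitly for each $\alpha$; the paper's version is shorter given that the cardinality computation has already been done. Both are valid.
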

\begin{proof}
First, notice that, in this case, $g_n^2\in\AOP_n$ and $J_n=\C_n$.  
Hence, $J_n\langle g_n^2\rangle J_n\subseteq\AOP_n$ and so, as the elements of $J_n$ are permutations of $\Omega_n$, 
we have $J_n\langle g_n^2\rangle J_n\subseteq J_{n-1}$. 
On the other hand, 
$$
\dom(g^rg_n^{2k}g^s)=\Omega_n\setminus\{n-r\}\quad\text{and}\quad\im(g^rg_n^{2k}g^s)=\Omega_n\setminus\{s\},
$$
for $0\< r,s\< n-1$ and $0\< k\< \frac{n-3}{2}$, 
from which it is easy to deduce that $|J_n\langle g_n^2\rangle J_n|=|\C_n\langle g_n^2\rangle\C_n|=n\cdot\frac{n-1}{2}\cdot n=|J_{n-1}|$, 
whence $J_n\langle g_n^2\rangle J_n=J_{n-1}$, as required. 
\end{proof}

Now, define 
$$
g_1=\transf{2&3&\cdots&n-1&n\\3&4&\cdots&n&2}, \quad 
J_{n-1}^\mathscr{o}=\{\alpha\in J_{n-1}\mid \mbox{$\gd(\alpha)$ is odd}\}
\quad\text{and}\quad 
J_{n-1}^\mathscr{e}=\{\alpha\in J_{n-1}\mid \mbox{$\gd(\alpha)$ is even}\}. 
$$
Then, we have: 

\begin{lemma}\label{jn-1e}
If $n$ is an even number, then $J_{n-1}^\mathscr{o}=J_n \langle g_1\rangle J_n$ 
and $J_{n-1}^\mathscr{e}=J_n \langle g_n\rangle J_n$.
\end{lemma}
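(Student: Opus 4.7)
The plan is to mimic the proof of Lemma~\ref{jn-1o}: establish the two inclusions $J_n\langle g_n\rangle J_n\subseteq J_{n-1}^{\mathscr{e}}$ and $J_n\langle g_1\rangle J_n\subseteq J_{n-1}^{\mathscr{o}}$ directly, and then close the loop with a cardinality match.

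For the inclusions, I would first verify that $\langle g_n\rangle$ and $\langle g_1\rangle$ both sit inside $\AOP_n$. Since $n$ is even, the permutations $\overline{g_n}=(1\,2)(1\,3)\cdots(1\,n-1)$ and $\overline{g_1}=(2\,3)(2\,4)\cdots(2\,n)$ are each products of $n-2$ transpositions and so lie in $\A_n$; by Lemma~\ref{-ab}, every power of $g_n$ and of $g_1$ also lies in $\AOP_n$. Combined with $J_n\subseteq\AOP_n$, this places every product $g^{2i}g_n^kg^{2j}$ and $g^{2i}g_1^kg^{2j}$ in $\AOP_n$ with rank $n-1$, so in $J_{n-1}$. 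A direct computation along the lines of the one at the end of the proof of Lemma~\ref{jn-1o} gives, for $0\<i,j\<\tfrac{n}{2}-1$ and $0\<k\<n-2$,
\[
\dom(g^{2i}g_n^kg^{2j})=\Omega_n\setminus\{n-2i\}, \qquad \im(g^{2i}g_n^kg^{2j})=\Omega_n\setminus\{2j\}
\]
(with the same convention as in Lemma~\ref{jn-1o} that an exponent $0$ on $g$ contributes the gap $n$), both excluded elements being even; hence $J_n\langle g_n\rangle J_n\subseteq J_{n-1}^{\mathscr{e}}$. Since $g_1$ has domain and image $\{2,\ldots,n\}$, the analogous calculation yields gaps $1-2i$ and $1+2j$ taken modulo $n$ into $\{1,\ldots,n\}$, which are always odd, giving $J_n\langle g_1\rangle J_n\subseteq J_{n-1}^{\mathscr{o}}$.

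For the reverse inclusions, I would count. By Proposition~\ref{chAOP}(i), $J_{n-1}$ is exactly the union of those $\mathscr{H}$-classes of $J_{n-1}^{\POPI_n}$ whose domain gap and image gap share parity, a property that depends only on the $\mathscr{H}$-class; since $\Omega_n$ has $\tfrac{n}{2}$ even and $\tfrac{n}{2}$ odd elements and each $\mathscr{H}$-class of $J_{n-1}^{\POPI_n}$ has $n-1$ elements, this gives $|J_{n-1}^{\mathscr{e}}|=|J_{n-1}^{\mathscr{o}}|=(n/2)^2(n-1)=n^2(n-1)/4$. On the other hand, the parametrisation $(i,k,j)\mapsto g^{2i}g_n^kg^{2j}$ on $\{0,\ldots,\tfrac{n}{2}-1\}\times\{0,\ldots,n-2\}\times\{0,\ldots,\tfrac{n}{2}-1\}$ is injective onto $J_n\langle g_n\rangle J_n$: the gap formulas recover $i$ and $j$, whereupon the invertibility of the outer permutations determines $k$. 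Therefore $|J_n\langle g_n\rangle J_n|=\tfrac{n}{2}(n-1)\tfrac{n}{2}=n^2(n-1)/4=|J_{n-1}^{\mathscr{e}}|$, and the inclusion becomes an equality; the same counting closes the $\langle g_1\rangle$ case.

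The only real obstacle is the bookkeeping of modular arithmetic in the gap formulas, particularly the corner cases $i=0$ and $j=0$ where the excluded element equals $n$ rather than a negative representative. Conceptually nothing beyond what already appears in the odd-$n$ case of Lemma~\ref{jn-1o} is required.
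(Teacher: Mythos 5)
Your proposal is correct and follows essentially the same route as the paper's proof: verify $g_1,g_n\in\AOP_n$, compute the domain and image gaps of $g^{2r}g_1^kg^{2s}$ and $g^{2r}g_n^kg^{2s}$ to get the inclusions $J_n\langle g_1\rangle J_n\subseteq J_{n-1}^{\mathscr{o}}$ and $J_n\langle g_n\rangle J_n\subseteq J_{n-1}^{\mathscr{e}}$, and then match cardinalities ($\frac{n}{2}\cdot(n-1)\cdot\frac{n}{2}$ on each side) to upgrade the inclusions to equalities. Your added detail on the injectivity of $(i,k,j)\mapsto g^{2i}g_n^kg^{2j}$ and on counting $J_{n-1}^{\mathscr{o}}$, $J_{n-1}^{\mathscr{e}}$ via Proposition~\ref{chAOP} just makes explicit what the paper leaves to the reader.
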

\begin{proof}
Observe that, in this case, we get $J_n=\langle g^2\rangle$ and, clearly, $g_1,g_n\in\AOP_n$. 
Moreover, $g_1\in J _{n-1}^\mathscr{o}$ and $g_n\in J _{n-1}^\mathscr{e}$. 

Let $0\< r,s\< \frac{n-2}{2}$ and $0\< k\< n-2$. Then, we have 
$$
\dom(g^{2r}g_1^{k}g^{2s})=\Omega_n\setminus\{n-2r+1\}\quad\text{and}\quad\im(g^{2r}g_1^{k}g^{2s})=\Omega_n\setminus\{2s+1\}
$$
and
$$
\dom(g^{2r}g_n^{k}g^{2s})=\Omega_n\setminus\{n-2r\}\quad\text{and}\quad\im(g^{2r}g_n^{k}g^{2s})=\Omega_n\setminus\{2s\}. 
$$
Hence, it follows immediately that $J_n \langle g_1\rangle J_n\subseteq J_{n-1}^\mathscr{o}$ 
and $J_n \langle g_n\rangle J_n\subseteq J_{n-1}^\mathscr{e}$. 
On the other hand, the above equalities also allow us to deduce that 
$|J_n\langle g_1\rangle J_n|=\frac{n}{2}\cdot(n-1)\cdot\frac{n}{2}=|J_{n-1}^\mathscr{o}|$ 
and $|J_n\langle g_n\rangle J_n|=\frac{n}{2}\cdot(n-1)\cdot\frac{n}{2}=|J_{n-1}^\mathscr{e}|$, 
whence $J_n \langle g_1\rangle J_n = J_{n-1}^\mathscr{o}$ 
and $J_n \langle g_n\rangle J_n = J_{n-1}^\mathscr{e}$, as required. 
\end{proof}

Now, we can establish the following result. 

\begin{proposition}\label{relJ}
Let $\alpha,\beta\in\AOP_n$ be two elements of rank $n-1$. Then:
\begin{enumerate}
\item For $n$ odd,  $\alpha\mathscr{J}\beta$; 
\item For $n$ even,  $\alpha\mathscr{J}\beta$ if and only if $\gd(\alpha)$ and $\gd(\beta)$ have the same parity. 
\end{enumerate}
\end{proposition}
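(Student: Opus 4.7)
My plan is to exploit Lemmas \ref{jn-1o} and \ref{jn-1e}, which already express $J_{n-1}$ (for $n$ odd) and its two parity-pieces (for $n$ even) as two-sided products of the group of units with a cyclic submonoid. The standard description of Green's relations in inverse submonoids of $\I_n$, recalled in the preliminaries, then does most of the work.

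For part (1), I would take $\alpha\in J_{n-1}$ and use Lemma \ref{jn-1o} to write $\alpha=u g_n^{2k} v$ with $u,v$ in the group of units $J_n$. Since units may be cancelled inside two-sided products, $\AOP_n\alpha\AOP_n=\AOP_n g_n^{2k}\AOP_n$. The elements of $\langle g_n^2\rangle$ are, as partial permutations of $\Omega_n$, all supported on the same set $\{1,\ldots,n-1\}$ with image $\{1,\ldots,n-1\}$, so they share the same $\mathscr{H}$-class (hence the same $\mathscr{J}$-class) in $\AOP_n$. Therefore $\AOP_n g_n^{2k}\AOP_n$ does not depend on $k$, and $J_{n-1}$ is a single $\mathscr{J}$-class. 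The sufficient direction of part (2) is entirely parallel: by Lemma \ref{jn-1e}, both $J_{n-1}^{\mathscr{o}}$ and $J_{n-1}^{\mathscr{e}}$ have the form $J_n\langle g\rangle J_n$ for $g\in\{g_1,g_n\}$, and all powers of $g$ share the same domain and image, so the same argument gives $\alpha\mathscr{J}\beta$ whenever $\gd(\alpha)$ and $\gd(\beta)$ have the same parity.

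The main obstacle is the necessary direction of part (2): if the gaps have different parities, then $\alpha$ and $\beta$ cannot be $\mathscr{J}$-related. My plan is to show that the parity of $\gd$ is invariant under multiplication by arbitrary elements of $\AOP_n$ on rank-$(n-1)$ inputs whose product still has rank $n-1$. Write $\beta=x\alpha y$ with $x,y\in\AOP_n$; the rank constraint $\rank(\beta)=n-1$ forces $x,y\in J_{n-1}\cup J_n$. If $x\in J_n=\langle g^2\rangle$, then $x=g^{2k}$ for some $k$, so $\dom(x\alpha)=\dom(\alpha)x^{-1}=\Omega_n\setminus\{\gd(\alpha)-2k\mod n\}$, and since $n$ is even this has the same parity as $\gd(\alpha)$. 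If instead $x\in J_{n-1}$, then $\rank(x\alpha)=n-1$ forces $\im(x)=\dom(\alpha)$, whence $\gi(x)=\gd(\alpha)$ and $\gd(x\alpha)=\gd(x)$; Proposition \ref{chAOP}(1) then yields $\gd(x)\equiv\gi(x)=\gd(\alpha)\pmod 2$. Thus in either case $\gd(x\alpha)\equiv\gd(\alpha)\pmod 2$. The right factor $y$ is handled symmetrically: in both subcases ($y\in J_n$ or $y\in J_{n-1}$), the rank-$(n-1)$ requirement forces $\im(x\alpha)=\dom(y)$ when $y\in J_{n-1}$, so $\dom(\beta)=\dom(x\alpha)$ in every scenario, giving $\gd(\beta)=\gd(x\alpha)\equiv\gd(\alpha)\pmod 2$. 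This contradicts the assumed different parities and closes the argument.
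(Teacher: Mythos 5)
Your argument is correct and follows essentially the same route as the paper: the positive direction rests on Lemmas \ref{jn-1o} and \ref{jn-1e} (the paper manipulates the powers of $g_n^2$ explicitly to recover $g_n^2$ from $(g_n^2)^k$, where you cancel the units and invoke the $\mathscr{H}$-relatedness of all elements of $\langle g_n^2\rangle$, but this is the same mechanism), and the necessity direction rests on the parity constraint of Proposition \ref{chAOP}(1). The only presentational difference is that the paper reduces necessity to the single pair $(g_1,g_n)$ and derives a contradiction from one factorization $g_1=\gamma g_n\lambda$, whereas you show directly that the parity of $\gd$ is invariant under any factorization $\beta=x\alpha y$ within rank $n-1$; your version is, if anything, slightly more explicit, since it spells out the case where the outer factors lie in the group of units, which the paper dispatches with a ``without loss of generality''.
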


\begin{proof} First, suppose that $n$ is odd. Then, by Lemma \ref{jn-1o}, we have $J_{n-1}=\C_n\langle g_n^2\rangle \C_n$, 
whence $\alpha=g^r (g_n^2)^{k} g^s$, for some $0\< r,s\< n-1$ and $0\< k\< \frac{n-3}{2}$. 
Thus, $(g_n^2)^k=g^{n-r}\alpha g^{n-s}$ and so $g_n^2 = g^{n-s} \alpha g^{n-r} (g_n^2)^{\frac{n-1}{2}-k+1}$. 
Therefore, $\alpha\mathscr{J}g_n^2$. Analogously, $\beta\mathscr{J}g_n^2$ and so $\alpha\mathscr{J}\beta$. 

\smallskip 

Next, we suppose that $n$ is even. 
If $\alpha\in J_{n-1}^\mathscr{o}$, then using Lemma \ref{jn-1e} and performing calculations similar to those in the previous paragraph,  
we can show that $\alpha\mathscr{J}g_1$. Analogously, if $\alpha\in J_{n-1}^\mathscr{e}$, then $\alpha\mathscr{J}g_n$. 
Thus, by showing that $(g_1,g_n)\not\in\mathscr{J}$, the proof is complete. 
So, suppose there exist $\gamma,\lambda\in\AOP_n$ such that $g_1=\gamma g_n \lambda$ and assume, 
without loss of generality, that $\gamma$ also has rank $n-1$. Then, $\dom(g_1)=\dom(\gamma)$ and $\im(\gamma)=\dom(g_n)$,  
whence $1=\gd(g_1)=\gd(\gamma)$ and $\gi(\gamma)=\gd(g_n)=n$, 
which is a contradiction since $\gd(\gamma)$ and $\gi(\gamma)$ should have the same parity. 
Therefore, $[g_1]_\mathscr{J}\not\leqslant_\mathscr{J}[g_n]_\mathscr{J}$ and so $(g_1,g_n)\not\in\mathscr{J}$, as required. 
 \end{proof} 
 
Let us suppose that $n$ is odd. Then, by the previous proposition, $J_{n-1}$ 
is a $\mathscr{J}$-class of $\AOP_n$. Moreover, 
it is not difficult to deduce that $J_{n-1}$ has $n$ $\mathscr{L}$-classes (and $\mathscr{R}$-classes) and its  
$\mathscr{H}$-classes have size $\frac{n-1}{2}$ (in particular, in the case where they contain an idempotent, they are cyclic groups of order $\frac{n-1}{2}$). 
Furthermore, we get $J_0<_\mathscr{J}J_1<_\mathscr{J}\cdots<_\mathscr{J}J_{n-2}<_\mathscr{J}J_{n-1}<_\mathscr{J}J_n$ in $\AOP_n$. 

\smallskip 

On the other hand, if $n$ is even, 
then Proposition \ref{relJ} enables us to conclude that $\AOP_n$ has two $\mathscr{J}$-classes of elements of rank $n-1$, namely,
$J_{n-1}^\mathscr{o}$ and $J_{n-1}^\mathscr{e}$. 
Moreover, both $J_{n-1}^\mathscr{o}$  and $J_{n-1}^\mathscr{e}$ have $\frac{n}{2}$ $\mathscr{L}$-classes (and $\mathscr{R}$-classes) and 
their $\mathscr{H}$-classes have size $n-1$ (in particular, their $\mathscr{H}$-classes of idempotents are cyclic groups of order $n-1$). 
Furthermore, it is easy to check that the poset of $\mathscr{J}$-classes of $\AOP_n$ can be represented by the Hasse diagram of Figure \ref{fAOP}. 
\begin{figure}[ht] 
\centering
\begin{tikzpicture}[scale=0.5]
\draw (0,0) node{\fbox{$J_n$}} ; 
\draw (-2,-1.5) node{\fbox{$J_{n-1}^\mathscr{o}$}} ; 
\draw (2,-1.5) node{\fbox{$J_{n-1}^\mathscr{e}$}} ; 
\draw (0,-3) node{\fbox{$J_{n-2}$}} ; 
\draw (0,-5) node{\fbox{$J_1$}} ; 
\draw (0,-6.5) node{\fbox{$J_0$}} ; 
\draw[thin] (0,-.55) -- (-1,-0.89);
\draw[thin] (0,-.55) -- (1,-0.89);
\draw[thin] (0,-2.42) -- (1,-2.11);
\draw[thin] (0,-2.42) -- (-1,-2.11);
\draw[thin,dotted] (0,-3.65) -- (0,-4.35);
\draw[thin] (0,-5.55) -- (0,-5.95);
\end{tikzpicture}
\caption{The Hasse diagram of $\AOP_n/_\mathscr{J}$, for $n$ even.} \label{fAOP}
\end{figure}

\section{The structure of $\AOR_n$}\label{AOR}

Let us consider the following permutation $h$ of order $2$ of $\Sym_n$: 
$$
h=\transf{1&2&\cdots&n-1&n\\n&n-1&\cdots&2&1} = \textstyle (1\:n)(2\:n)\cdots(\lfloor\frac{n}{2}\rfloor\:\lfloor\frac{n}{2}\rfloor+1). 
$$ 
Then,  for $n\geqslant3$, $h$ and $g$ generate the well-known \textit{dihedral group} $\D_{2n}$ of order $2n$ 
(considered as a subgroup of $\Sym_n$). In fact, we have 
$$
\D_{2n}=\langle g,h\mid g^n=1,h^2=1, hg=g^{n-1}h\rangle=\{\id_n,g,g^2,\ldots,g^{n-1}, h,hg,hg^2,\ldots,hg^{n-1}\}. 
$$
Since it is clear that $h\in\AI_n$ if and only if $\lfloor\frac{n}{2}\rfloor$ is even if and only if $n\equiv 0\mod 4$ or $n\equiv 1\mod 4$, 
in view of (\ref{jn}), it is easy to conclude that 
\begin{equation}\label{qn}
\D_{2n}\cap\A_n=\left\{
\begin{array}{ll}
\{g^{2k}, hg^{2k}\mid 0\< k\<\frac{n}{2}-1 \}  = \langle g^2,h\rangle   & \mbox{if $n\equiv 0\mod 4$}\\
\D_{2n} & \mbox{if $n\equiv 1\mod 4$}\\
\{g^{2k}, hg^{2k+1}\mid 0\< k\<\frac{n}{2}-1 \} = \langle g^2,hg\rangle   & \mbox{if $n\equiv 2\mod 4$}\\
\C_{n} & \mbox{if $n\equiv 3\mod 4$}. 
\end{array}
\right.
\end{equation} 
Notice that, for $n$ even, both $\langle g^2,h\rangle$ and $\langle g^2,hg\rangle$ are dihedral groups of order $n\,(=2\frac{n}{2})$. 

\medskip 

Now, recall that $\PORI_n/\mathscr{J}=\{J_0^{\PORI_n}<_{\mathscr{J}}J_1^{\PORI_n}<_{\mathscr{J}}\cdots<_{\mathscr{J}}J_n^{\PORI_n}\}$, 
where $J_k^{\PORI_n}=J_k^{\I_n}\cap\PORI_n$, for $0\leqslant k\leqslant n$. 
We have $|J_0^{\PORI_n}|=1$, $|J_1^{\PORI_n}|=n^2$, $|J_2^{\PORI_n}|=2\binom{n}{2}^2$ and $|J_k^{\PORI_n}|=2k\binom{n}{k}^2$, for $3\leqslant k\leqslant n$. 
In particular,  $J_0^{\PORI_n}=\{\emptyset\}$ and $J_n^{\PORI_n}=\D_{2n}$. 
Moreover, the maximal groups of $J_k^{\PORI_n}$ are dihedral of order $2k$ for $3\leqslant k\leqslant n$, of order $2$ for $k=2$, and trivial for $k\leqslant 1$.  
For more details, see \cite{Fernandes&Gomes&Jesus:2004}. 

\medskip 

From now on, in this section, we consider $n\geqslant 4$. Observe that, $\AOR_3=\AOP_3=\AI_3$. 

\smallskip 

Let $Q_n$  be the group of units of $\AOR_n$. Then, $Q_n=J_n^{\PORI_n}\cap\AOR_n=\D_{2n}\cap\A_n$. 
On the other hand, we have 
$$
\{\alpha\in\AOR_n\mid |\im(\alpha)|\leqslant n-2\}=\{\alpha\in\PORI_n\mid |\im(\alpha)|\leqslant n-2\}=J_{n-2}^{\PORI_n}\cup\cdots\cup J_1^{\PORI_n}\cup J_0^{\PORI_n}.
$$
Therefore, as for $\AOP_n$, it remains to characterize the elements of $\AOR_n$ with rank $n-1$. 
More precisely, as $\POPI_n\subseteq\PORI_n$, it follows that $\AOP_n\subseteq\AOR_n$ and so, in view of Proposition \ref{chAOP}, 
we only need to characterize the elements of $\AOR_n\setminus\AOP_n$ with rank $n-1$. 

\medskip 

Next, let us consider 
$$
h_n=\transf{1&2&\cdots&n-2&n-1\\n-1&n-2&\cdots&2&1}\in J_{n-1}^{\PORI_n}. 
$$ 
Then, $\langle g_n,h_n\rangle$ is a dihedral group of order $2(n-1)$ and an $\mathscr{H}$-class of $\PORI_n$. 
As $\bar h_n\in\A_n$ if and only if $n-1\equiv 0\mod 4$ or $n-1\equiv 1\mod 4$  if and only if $n\equiv 1\mod 4$ or $n\equiv 2\mod 4$, 
in view of (\ref{gn}), we may deduce that 
\begin{equation}\label{gnhn}
\langle \bar g_n,\bar h_n\rangle\cap\A_n=\left\{
\begin{array}{ll}
\langle \bar g_n\rangle  & \mbox{if $n\equiv 0\mod 4$}\\
\langle \bar g_n^2,\bar h_n\rangle  & \mbox{if $n\equiv 1\mod 4$}\\
\langle \bar g_n,\bar h_n\rangle  & \mbox{if $n\equiv 2\mod 4$}\\
\langle \bar g_n^2,\bar h_n\bar g_n\rangle  & \mbox{if $n\equiv 3\mod 4$}. 
\end{array}
\right.
\end{equation} 
Observe that, if $n$ is odd, then $\langle \bar g_n^2,\bar h_n\rangle$ and $\langle \bar g_n^2,\bar h_n \bar g_n\rangle$ are dihedral groups of order $n-1\,(=2\frac{n-1}{2})$. 

Let $\alpha\in J_{n-1}^{\PORI_n}$. Then, we have $\widehat\alpha\in \langle g_n,h_n\rangle$. 
Since $\alpha_L,\alpha_R\in\POI_n$, if $\alpha\in J_{n-1}^{\PORI_n}\setminus J_{n-1}^{\POPI_n}$, then $\widehat\alpha\in J_{n-1}^{\PORI_n}\setminus J_{n-1}^{\POPI_n}$, 
and so $\widehat\alpha\in\langle g_n,h_n\rangle\setminus\langle g_n\rangle=\{h_n,h_ng_n,\ldots,h_ng_n^{n-2}\}$. 
Therefore, for  $\alpha\in J_{n-1}^{\PORI_n}\setminus J_{n-1}^{\POPI_n}$, 
it follows from (\ref{gnhn}) and Lemma \ref{-ab} that: 
\begin{equation}\label{qn-1}
\begin{array}{l}
\mbox{if $n\equiv 0\mod 4$, then $\widehat\alpha\not\in\AOR_n$;}\\
\mbox{if $n\equiv 1\mod 4$, then $\widehat\alpha\in\AOR_n$ if and only if $1\widehat\alpha$ is even;}   \\
\mbox{if $n\equiv 2\mod 4$, then $\widehat\alpha\in\AOR_n$;}\\
\mbox{if $n\equiv 3\mod 4$, then $\widehat\alpha\in\AOR_n$ if and only if $1\widehat\alpha$ is odd.}  
\end{array}
\end{equation} 

We can now prove the following description of the elements of $\AOR_n\setminus\AOP_n$ with rank $n-1$: 

\begin{proposition}\label{chAOR}
Let $\alpha\in J_{n-1}^{\PORI_n}\setminus J_{n-1}^{\POPI_n}$. Then:
\begin{enumerate}
\item For $n\equiv 0\mod 4$,  $\alpha\in\AOR_n$ if and only if $\gd(\alpha)$ and $\gi(\alpha)$ have distinct parities;
\item For $n\equiv 1\mod 4$,  $\alpha\in\AOR_n$ if and only if 
either $\gd(\alpha)$ and $\gi(\alpha)$ have the same parity and $1\widehat\alpha$ is even  
or $\gd(\alpha)$ and $\gi(\alpha)$ have distinct parities and $1\widehat\alpha$ is odd.  
\item For $n\equiv 2\mod 4$,  $\alpha\in\AOR_n$ if and only if $\gd(\alpha)$ and $\gi(\alpha)$ have the same parity;
\item For $n\equiv 3\mod 4$,  $\alpha\in\AOR_n$ if and only if  
either $\gd(\alpha)$ and $\gi(\alpha)$ have the same parity and $1\widehat\alpha$ is odd 
or $\gd(\alpha)$ and $\gi(\alpha)$ have distinct parities and $1\widehat\alpha$ is even.  
\end{enumerate}
\end{proposition}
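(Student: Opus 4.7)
The plan is to mirror the argument used in the proof of Proposition \ref{chAOP}, reducing the question of whether $\alpha\in\AOR_n$ to a sign computation that combines a contribution from $\widehat\alpha$ with one coming from $\alpha_L$ and $\alpha_R$.

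First I would record the factorization $\alpha=\alpha_L^{-1}\widehat\alpha\alpha_R^{-1}$ and invoke Lemma \ref{-ab} to obtain $\overline\alpha=\overline{\alpha}_L^{-1}\overline{\widehat\alpha}\,\overline{\alpha}_R^{-1}$. Using the observation recorded just before Proposition \ref{chAOP}, namely that $\overline{\alpha}_L\in\A_n$ (respectively $\overline{\alpha}_R\in\A_n$) if and only if $\gd(\alpha)$ (respectively $\gi(\alpha)$) has the same parity as $n$, one sees that $\overline{\alpha}_L^{-1}\overline{\alpha}_R^{-1}\in\A_n$ exactly when $\gd(\alpha)$ and $\gi(\alpha)$ have the same parity, and it lies outside $\A_n$ otherwise. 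This dichotomy is the only role played by the pair $(\gd(\alpha),\gi(\alpha))$ in what follows.

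Next, since $\alpha\in J_{n-1}^{\PORI_n}\setminus J_{n-1}^{\POPI_n}$ and $\alpha_L,\alpha_R\in\POI_n$, the element $\widehat\alpha$ must lie in $\{h_n,h_ng_n,\ldots,h_ng_n^{n-2}\}$, so the sign of $\overline{\widehat\alpha}$ is completely determined by (\ref{qn-1}) in terms of $n\mod 4$ and, when needed, the parity of $1\widehat\alpha$. Combining the two pieces via the multiplicativity of the sign and the equivalence $\alpha\in\AOR_n\Leftrightarrow\overline\alpha\in\A_n$ reduces the proposition to a case split on $n\mod 4$.

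For $n\equiv 0\mod 4$, (\ref{qn-1}) gives $\overline{\widehat\alpha}\notin\A_n$ always, so $\alpha\in\AOR_n$ forces $\overline{\alpha}_L^{-1}\overline{\alpha}_R^{-1}\notin\A_n$, that is, $\gd(\alpha)$ and $\gi(\alpha)$ have distinct parities. For $n\equiv 2\mod 4$, $\overline{\widehat\alpha}\in\A_n$ always, so $\alpha\in\AOR_n$ if and only if $\gd(\alpha)$ and $\gi(\alpha)$ have the same parity. For $n\equiv 1\mod 4$ (respectively $n\equiv 3\mod 4$), the parity of $\overline{\widehat\alpha}$ is controlled by whether $1\widehat\alpha$ is even (respectively odd), and splitting on whether $\gd(\alpha)$ and $\gi(\alpha)$ agree in parity yields the two disjunctions in item (2) (respectively item (4)). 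Each case is a direct parity verification, so there is no real obstacle beyond keeping the mod-$4$ bookkeeping straight; all the structural content is already packaged in Lemma \ref{-ab} together with (\ref{qn-1}) and the preliminary observation on $\overline{\alpha}_L,\overline{\alpha}_R$.
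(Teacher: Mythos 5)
Your proposal is correct and follows essentially the same route as the paper: factor $\alpha=\alpha_L^{-1}\widehat\alpha\alpha_R^{-1}$, apply Lemma \ref{-ab}, translate the parity condition on $\gd(\alpha),\gi(\alpha)$ into whether $\overline{\alpha}_L,\overline{\alpha}_R$ lie in $\A_n$, and combine with (\ref{qn-1}) via multiplicativity of the sign in a case split on $n\bmod 4$. The paper writes the sign bookkeeping as chains of equivalences rather than as a product of signs, but the argument is the same.
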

\begin{proof} 

We begin by recalling that $\alpha=\alpha_L^{-1}\widehat\alpha\alpha_R^{-1}$ and so, by Lemma \ref{-ab}, 
$\overline{\alpha}=\overline{\alpha}_L^{-1}\overline{\widehat\alpha}\overline{\alpha}_R^{-1}$. 
Recall also that $\gd(\alpha)$ and $\gi(\alpha)$ have the same parity 
if and only if $\overline{\alpha}_L,\overline{\alpha}_R\in\A_n$ or $\overline{\alpha}_L,\overline{\alpha}_R\notin\A_n$. 

First, we take $n\equiv 0\mod 4$. Then, by (\ref{qn-1}), we have $\overline{\widehat\alpha}\not\in\A_n$ and so 
$$
\begin{array}{rcl}
\alpha\in\AOR_n & \Leftrightarrow & \overline{\alpha}\in\A_n \\
& \Leftrightarrow & \mbox{$\overline{\alpha}_L^{-1}\in\A_n$ if and only if $\overline{\alpha}_R^{-1}\notin\A_n$}\\
& \Leftrightarrow & \mbox{$\overline{\alpha}_L\in\A_n$ if and only if $\overline{\alpha}_R\notin\A_n$}\\
& \Leftrightarrow & \mbox{$\gd(\alpha)$ and $\gi(\alpha)$ have distinct parities.}\\
\end{array}
$$

Similarly, for $n\equiv 2\mod 4$, by (\ref{qn-1}), we have $\overline{\widehat\alpha}\in\A_n$ and so 
$$
\begin{array}{rcl}
\alpha\in\AOR_n & \Leftrightarrow & \overline{\alpha}\in\A_n \\
& \Leftrightarrow & \mbox{$\overline{\alpha}_L^{-1}\in\A_n$ if and only if $\overline{\alpha}_R^{-1}\in\A_n$}\\
& \Leftrightarrow & \mbox{$\overline{\alpha}_L\in\A_n$ if and only if $\overline{\alpha}_R\in\A_n$}\\
& \Leftrightarrow & \mbox{$\gd(\alpha)$ and $\gi(\alpha)$ have the same parity.}\\
\end{array}
$$

Next, let us suppose that $n\equiv 1\mod 4$. 
If $\gd(\alpha)$ and $\gi(\alpha)$ have the same parity, by (\ref{qn-1}), we have  
$$
\alpha\in\AOR_n \quad \Leftrightarrow \quad \overline{\alpha}\in\A_n \quad \Leftrightarrow \quad \overline{\widehat\alpha}\in\A_n 
\quad \Leftrightarrow \quad \mbox{$1\widehat\alpha$ is even}. 
$$ 
On the other hand, by (\ref{qn-1}), 
if $\gd(\alpha)$ and $\gi(\alpha)$ have the distinct parities, then 
then 
$$
\alpha\in\AOP_n \quad \Leftrightarrow \quad \overline{\alpha}\in\A_n \quad \Leftrightarrow \quad \overline{\widehat\alpha}\notin\A_n 
\quad \Leftrightarrow \quad \mbox{$1\widehat\alpha$ is odd}.  
$$ 

Similarly, for $n\equiv 3\mod 4$, by (\ref{qn-1}), we have:  
if $\gd(\alpha)$ and $\gi(\alpha)$ have the same parity, then 
$$
\alpha\in\AOR_n \quad \Leftrightarrow \quad \overline{\alpha}\in\A_n \quad \Leftrightarrow \quad \overline{\widehat\alpha}\in\A_n 
\quad \Leftrightarrow \quad \mbox{$1\widehat\alpha$ is odd}; 
$$ 
if $\gd(\alpha)$ and $\gi(\alpha)$ have the distinct parities, then 
then 
$$
\alpha\in\AOP_n \quad \Leftrightarrow \quad \overline{\alpha}\in\A_n \quad \Leftrightarrow \quad \overline{\widehat\alpha}\notin\A_n 
\quad \Leftrightarrow \quad \mbox{$1\widehat\alpha$ is even},  
$$ 
as required. 
\end{proof}

Let $Q_{n-1}=J_{n-1}^{\PORI_n}\cap\AOR_n$. 

Suppose that $n$ is even. Hence, in $\Omega_n$, we have $\frac{1}{2}n^2$ pairs with the same parity and $\frac{1}{2}n^2$ pairs with distinct parities. 
On the other hand, for each pair $(d,i)$ of elements of $\Omega_n$, we have $n-1$ transformations $\alpha\in\POPI_n$ such that $\gd(\alpha)=d$ and $\gi(\alpha)=i$, 
and $n-1$ transformations $\alpha\in\PORI_n\setminus\POPI_n$ such that $\gd(\alpha)=d$ and $\gi(\alpha)=i$. Hence, by Propositions \ref{chAOP} and \ref{chAOR}, 
we obtain $\frac{1}{2}n^2(n-1)+\frac{1}{2}n^2(n-1)$ elements in $Q_{n-1}$, i.e. $|Q_{n-1}|=n^2(n-1)$. 

Next, suppose that $n$ is odd. 
Let $\alpha\in J_{n-1}^{\PORI_n}$. Then, clearly, the mapping 
$$
\begin{array}{ccc}
\{\beta\in\PORI_n\setminus\POPI_n\mid \beta\mathscr{H}\alpha\}&\longrightarrow&  \langle g_n,h_n\rangle\setminus \langle g_n\rangle = 
\{\gamma\in\PORI_n\setminus\POPI_n\mid \gamma\mathscr{H} g_n\}  \\
\beta & \longmapsto & \widehat\beta
\end{array}
$$
(considering the relation $\mathscr{H}$ in $\PORI_n$) is a bijection and 
$$
|\{\gamma\in\langle g_n,h_n\rangle\setminus\langle g_n\rangle \mid \mbox{$1\gamma$ is odd}\}| = 
|\{\gamma\in\langle g_n,h_n\rangle\setminus \langle g_n\rangle \mid \mbox{$1\gamma$ is even}\}| = \textstyle \frac{n-1}{2}, 
$$
whence 
$$
|\{\beta\in\PORI_n\setminus\POPI_n\mid \mbox{$\beta\mathscr{H}\alpha$ and $1\widehat\beta$ is odd}\}| = 
|\{\beta\in\PORI_n\setminus\POPI_n\mid \mbox{$\beta\mathscr{H}\alpha$ and $1\widehat\beta$ is even}\}| = \textstyle \frac{n-1}{2}.  
$$
Since $J_{n-1}^{\PORI_n}$ contains $n^2$ $\mathscr{H}$-classes, by Proposition \ref{chAOR}, it follows that 
$
|Q_{n-1}\cap(\PORI_n\setminus\POPI_n)|= \frac{n^2(n-1)}{2}. 
$
On the other hand, as proved in Section \ref{AOP}, $|Q_{n-1}\cap\POPI_n|=|J_{n-1}|=\frac{n^2(n-1)}{2}$, whence 
$
|Q_{n-1}|= n^2(n-1). 
$

Therefore, as $|\PORI_n|=1+n\binom{2n}{n} -\frac{n^2(n^2-2n+3)}{2}$, by the previous calculations and (\ref{qn}), we obtain: 

\begin{proposition}\label{carAOR}
For $n\geqslant4$, 
$|\AOR_n| = \left\{
\begin{array}{ll}
1+n\binom{2n}{n} -\frac{n^2(n^2+1)}{2} & \mbox{if $n\equiv 1\mod 4$}\\ \\ 
1+n\binom{2n}{n} -\frac{n^2(n^2+1)}{2} -n & \mbox{if $n\not\equiv 1\mod 4$}. 
\end{array}
\right.
$
\end{proposition}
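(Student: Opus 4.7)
The plan is to decompose $\AOR_n$ by rank and then use the cardinalities that have already been assembled. Since $Q_n$ and $Q_{n-1}$ exhaust the elements of rank $n$ and $n-1$, and since the beginning of the section gives the identity $\{\alpha\in\AOR_n\mid |\im(\alpha)|\leqslant n-2\}=\{\alpha\in\PORI_n\mid |\im(\alpha)|\leqslant n-2\}$, the three pieces are disjoint and cover $\AOR_n$. Therefore
\[
|\AOR_n| \;=\; |Q_n| \;+\; |Q_{n-1}| \;+\; |\PORI_n| \;-\; |J_n^{\PORI_n}| \;-\; |J_{n-1}^{\PORI_n}|.
\]

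Next I would substitute the known values. From the $\PORI_n$ cardinalities recalled at the start of Section \ref{AOR}, $|J_n^{\PORI_n}|=|\D_{2n}|=2n$ and $|J_{n-1}^{\PORI_n}|=2(n-1)\binom{n}{n-1}^2=2n^2(n-1)$; also $|Q_{n-1}|=n^2(n-1)$ was just established in the preceding paragraphs. Plugging these in and combining with the stated formula for $|\PORI_n|$ collapses the expression to
\[
|\AOR_n| \;=\; |Q_n| \;+\; |\PORI_n| \;-\; n^2(n-1) \;-\; 2n.
\]

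Finally, I would read off $|Q_n|=|\D_{2n}\cap\A_n|$ from (\ref{qn}): it equals $2n$ when $n\equiv 1\mod 4$ and $n$ in each of the other three residues mod $4$. In the first case the extra $+n$ cancels the $-2n$ down to $-n$; combined with $|\PORI_n|-n^2(n-1)$ and using the identity $\tfrac{n^2(n^2-2n+3)}{2}+n^2(n-1)=\tfrac{n^2(n^2+1)}{2}$, one obtains the top line of the proposition. In the remaining three cases there is an extra $-n$ left over, producing the bottom line; note in particular that the three subcases $n\equiv 0,2,3\mod 4$ collapse into a single formula precisely because they share the same value of $|Q_n|$.

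The proof is essentially algebraic bookkeeping; the only step requiring care is keeping track of the two different values of $|Q_n|$ and verifying the polynomial simplification above, which is routine. No new structural argument is needed beyond what has already been proved.
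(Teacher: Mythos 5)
Your proposal is correct and follows essentially the same route as the paper: the paper's (implicit) proof is exactly this bookkeeping, combining $|Q_n|$ from (\ref{qn}), the count $|Q_{n-1}|=n^2(n-1)$ established in the preceding paragraphs, and the identification of the rank $\leqslant n-2$ part of $\AOR_n$ with that of $\PORI_n$. The algebraic simplification you indicate checks out, so nothing further is needed.
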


\medskip 

For $0\leqslant k\leqslant n-2$, denote $J_k^{\PORI_n}$ simply by $Q_k$. Then, as for $\AOP_n$, 
it is easy to conclude that $Q_0,Q_1,\ldots,Q_{n-2}$ are also $\mathscr{J}$-classes of $\AOR_n$ and 
$Q_0<_\mathscr{J}Q_1<_\mathscr{J}\cdots<_\mathscr{J}Q_{n-2}<_\mathscr{J}Q_n$  in $\AOR_n$. 
With respect to the elements of $\AOR_n$ with rank $n-1$, we have the following result. 

\begin{proposition}\label{RrelJ}
Let $\alpha,\beta\in\AOR_n$ be two elements of rank $n-1$. Then:
\begin{enumerate}
\item For $n\not\equiv 2\mod 4$,  $\alpha\mathscr{J}\beta$; 
\item For $n\equiv 2\mod 4$,  $\alpha\mathscr{J}\beta$ if and only if $\gd(\alpha)$ and $\gd(\beta)$ have the same parity. 
\end{enumerate}
\end{proposition}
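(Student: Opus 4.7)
The plan is to mirror the proof of Proposition~\ref{relJ}, replacing the cyclic unit group $\C_n$ by $Q_n$ (described in~(\ref{qn})) and the cyclic $\mathscr{H}$-class $\langle g_n\rangle$ by an appropriate subgroup of the dihedral $\mathscr{H}$-class $\langle g_n,h_n\rangle$. First I would prove case-by-case decompositions $Q_{n-1}=Q_n\langle\varepsilon\rangle Q_n$ (for $n\not\equiv 2\mod 4$), or $Q_{n-1}^\mathscr{o}=Q_n\langle\varepsilon_\mathscr{o}\rangle Q_n$ and $Q_{n-1}^\mathscr{e}=Q_n\langle\varepsilon_\mathscr{e}\rangle Q_n$ (for $n\equiv 2\mod 4$), where the generators are picked from $\{g_n^2, g_n, g_1, h_n, h_ng_n\}$ according to which one lies in $\AOR_n$ for the given residue (this is read off from~(\ref{gnhn}) and~(\ref{qn-1})) and has its $\AOR_n$-$\mathscr{H}$-class equal to the cyclic or dihedral subgroup of $\langle\overline{g}_n,\overline{h}_n\rangle\cap\A_n$ listed in~(\ref{gnhn}). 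Each inclusion $\supseteq$ is then immediate from closure; the reverse inclusion follows from a cardinality count matching $|Q_n|\cdot|\langle\varepsilon\rangle|\cdot|Q_n|$ (with appropriate quotienting by domain/image stabilisers) against $|Q_{n-1}|$ (or its two parity halves), exactly as in the proofs of Lemmas~\ref{jn-1o} and~\ref{jn-1e}.

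Given these decompositions, for any $\alpha$ in the relevant component I would write $\alpha=\gamma_1\varepsilon^k\gamma_2$ with $\gamma_1,\gamma_2\in Q_n$, isolate $\varepsilon^k=\gamma_1^{-1}\alpha\gamma_2^{-1}$, and then, using the cyclic/dihedral structure of $\langle\varepsilon\rangle$, recover $\varepsilon$ itself as a further product of $\alpha$ with units from $Q_n$ and powers of $\varepsilon$ -- exactly the manipulation performed at the end of the proof of Proposition~\ref{relJ}(1). This gives $\alpha\mathscr{J}\varepsilon$, so any two elements in a given component are $\mathscr{J}$-related. For $n\equiv 1\mod 4$ the two parity halves of $Q_{n-1}$ merge because $\C_n\subseteq Q_n=\D_{2n}$ acts transitively on $\Omega_n$; for $n\equiv 3\mod 4$ they merge because $Q_n=\C_n$ is transitive; for $n\equiv 0\mod 4$ they merge because $h\in Q_n$ sends $i$ to $n+1-i$ and so flips the parity of $\gd$. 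This settles part~(1).

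For part~(2), a direct computation shows that $Q_n=\langle g^2,hg\rangle$ preserves the parity of $\gd$ when $n\equiv 2\mod 4$: both $g^{2k}$ and $hg^{2k+1}$ act on $\Omega_n$ by a parity-preserving permutation. So the conjugation argument only yields $\mathscr{J}$-equivalence inside each of $Q_{n-1}^\mathscr{o}$ and $Q_{n-1}^\mathscr{e}$ separately. To rule out $(\varepsilon_\mathscr{o},\varepsilon_\mathscr{e})\in\mathscr{J}$ I would adapt the obstruction from the last paragraph of the proof of Proposition~\ref{relJ}(2): assuming $\varepsilon_\mathscr{o}=\gamma\varepsilon_\mathscr{e}\lambda$ with $\gamma\in\AOR_n$ of rank $n-1$, matching $\dom$ and $\im$ forces $\gd(\gamma)$ odd and $\gi(\gamma)$ even; but Propositions~\ref{chAOP}(1) and~\ref{chAOR}(3) together show that every rank-$(n-1)$ element of $\AOR_n$ with $n\equiv 2\mod 4$ must have $\gd$ and $\gi$ of the \emph{same} parity, a contradiction. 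The main obstacle is step~1 -- the bookkeeping of which generator works for which residue modulo~$4$ and the cardinality verification -- while the subsequent $\mathscr{J}$-arguments are direct transcriptions from Proposition~\ref{relJ}.
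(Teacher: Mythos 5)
Your strategy is viable but it is a genuinely different route from the paper's. The paper does not re-derive product decompositions of $Q_{n-1}$ at all: it leans entirely on Proposition \ref{relJ}. For $n$ odd it takes $\alpha\in\AOR_n\setminus\AOP_n$, sets $h_0=h_n$ or $h_0=h_ng_n$ according to the residue, uses $\alpha=\alpha g^{n-\gi(\alpha)}h_0^2g^{\gi(\alpha)}$ and the fact that $\alpha g^{n-\gi(\alpha)}h_0\in J_{n-1}\subseteq\AOP_n$ to transfer the $\mathscr{J}$-equivalence $\alpha g^{n-\gi(\alpha)}h_0\mathscr{J}g_n^2$ from $\AOP_n$ to $\AOR_n$; for $n\equiv0\mod 4$ it first proves $g_1\mathscr{J}g_n$ in $\AOR_n$ via two explicit order-reversing elements $g_{n,1},g_{1,n}$ and then reduces via the unit $h$; for $n\equiv2\mod 4$ it uses the unit $hg$ and the observation $\gd(\alpha hg)=\gd(\alpha)$. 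This buys a short proof with no new counting. Your approach is self-contained but pays for it with the cardinality bookkeeping you yourself identify as the main obstacle. Your obstruction argument for part (2) coincides in substance with the paper's (which simply says it "can be done similarly to the proof of Proposition \ref{relJ}"), and your appeal to Propositions \ref{chAOP}(1) and \ref{chAOR}(3) is the right way to make it explicit.

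There is one concrete step that fails as written: the decomposition $Q_{n-1}=Q_n\langle\varepsilon\rangle Q_n$ with a \emph{single} generator $\varepsilon$ cannot hold for $n\equiv3\mod 4$. There $Q_n=\C_n$ acts simply transitively on $\Omega_n$, so $|Q_n X Q_n|=n\cdot|X|\cdot n$ with no overcounting, while $|Q_{n-1}|=n^2(n-1)$ forces $|X|=n-1$; but by (\ref{gnhn}) the group $\mathscr{H}$-class of $\id_{\{1,\ldots,n-1\}}$ in $\AOR_n$ is the dihedral group $\langle g_n^2,h_ng_n\rangle$ of order $n-1$, which is not cyclic for $n\geqslant7$, and every single element of it generates a subgroup of order at most $\frac{n-1}{2}$. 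So your own cardinality check would come up short by at least a factor of $2$. The fix is to take the full two-generator group $\langle g_n^2,h_ng_n\rangle$ as the middle factor (for $n\equiv1\mod 4$ the issue is masked because $Q_n=\D_{2n}$ is twice as large and the stabiliser reflections supply the missing dihedral elements, so a cyclic middle factor can still suffice there). With that correction, and with the recovery step phrased as $\varepsilon_0=xx^{-1}$ for $x=\gamma_1^{-1}\alpha\gamma_2^{-1}$ in the group $\mathscr{H}$-class (rather than via powers of a single $\varepsilon$), your argument goes through.
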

\begin{proof}
First, we suppose that $n$ is odd. 

If $\alpha\in\AOP_n$, then $\alpha\mathscr{J}g_n^2$ in $\AOP_n$, 
by Proposition \ref{relJ}, and so $\alpha\mathscr{J}g_n^2$ (in $\AOR_n$). 

So, suppose that $\alpha\in\AOR_n\setminus\AOP_n$. 
Define $h_0=h_n$, if $n\equiv 1\mod 4$, and $h_0=h_ng_n$, if $n\equiv 3\mod 4$. 
Then, $h_0\in Q_{n-1}$,  $\alpha g^{n-\gi(\alpha)}h_0\in J_{n-1}$ and $\alpha=\alpha g^{n-\gi(\alpha)}h_0^2g^{\gi(\alpha)}$. 
Observe that $g\in\AOR_n$. 
Therefore, by Proposition \ref{relJ}, we have $\alpha g^{n-\gi(\alpha)}h_0\mathscr{J}g_n^2$ in $\AOP_n$, whence 
$\alpha g^{n-\gi(\alpha)}h_0=ug_n^2v$ and $g_n^2=u'\alpha g^{n-\gi(\alpha)}h_0v'$, for some $u,v,u',v'\in\AOP_n$. 
Thus, $\alpha=ug_n^2 v h_0g^{\gi(\alpha)}$ and so $\alpha\mathscr{J}g_n^2$ (in $\AOR_n$). 

Analogously, $\beta\mathscr{J}g_n^2$ and so $\alpha\mathscr{J}\beta$. 

\smallskip

Next, we suppose that $n\equiv 0\mod 4$. 
We begin by showing that, in this case, $g_1\mathscr{J}g_n$ (in $\AOR_n$). 
Let 
$$
g_{n,1}=\transf{1&2&\cdots&n-2&n-1\\n&n-1&\cdots&3&2}\quad\text{and}\quad  g_{1,n}=\transf{2&3&4&\cdots&n\\2&1&n-1&\cdots&3}. 
$$
Then, $g_{n,1},g_{1,n}\in\AOR_n$, by Proposition \ref{chAOR}, and $g_n=g_{n,1}g_1g_{1,n}$ and $g_1=g_{n,1}^{-1}g_ng_{1,n}^{-1}$, 
whence $g_1\mathscr{J}g_n$. 

If $\alpha\in\AOP_n$, then $\alpha\mathscr{J}g_n$ or $\alpha\mathscr{J}g_1$ in $\AOP_n$, 
by Proposition \ref{relJ}, and so $\alpha\mathscr{J}g_n$ (in $\AOR_n$), since $g_1\mathscr{J}g_n$. 

So, suppose that $\alpha\in\AOR_n\setminus\AOP_n$. As $h\in\AOR_n$, then $\alpha h\in J_{n-1}$, 
whence $\alpha h\mathscr{J}g_n$ or $\alpha h\mathscr{J}g_1$ in $\AOP_n$, 
by Proposition \ref{relJ}, and so $\alpha h\mathscr{J}g_n$ (in $\AOR_n$), since $g_1\mathscr{J}g_n$. 
Hence, $\alpha h= ug_nv$ and $g_n=u'\alpha hv'$, for some $u,v,u',v'\in\AOR_n$, 
from which follows also that $\alpha = ug_nvh$ and so $\alpha\mathscr{J}g_n$. 

Analogously, $\beta\mathscr{J}g_n$ and so $\alpha\mathscr{J}\beta$. 

\smallskip

Finally, we suppose that $n\equiv 2\mod 4$. 

Suppose that $\alpha\in\AOP_n$. Then, by Proposition \ref{relJ}, we have in $\AOP_n$ 
(and so also in $\AOR_n$) that $\alpha\mathscr{J}g_n$, if $\gd(\alpha)$ is even, 
and $\alpha\mathscr{J}g_1$, if $\gd(\alpha)$ is odd. 

Now, suppose that $\alpha\in\AOR_n\setminus\AOP_n$. 
Observe that, in this case, $hg\in\AOR_n$, whence $\alpha hg\in J_{n-1}$. 
Moreover, $\gd(\alpha hg)=\gd(\alpha)$. 
Then, by Proposition \ref{relJ}, in $\AOP_n$, we have $\alpha hg\mathscr{J}g_n$, if $\gd(\alpha)$ is even, 
and $\alpha hg\mathscr{J}g_1$, if $\gd(\alpha)$ is odd. 
If $\alpha hg\mathscr{J}g_n$, then $\alpha hg= ug_nv$ and $g_n=u'\alpha hg v'$, for some $u,v,u',v'\in\AOP_n$, 
from which follows also that $\alpha = ug_nvhg$ and so $\alpha\mathscr{J}g_n$ (in $\AOR_n$). 
Similarly, if $\alpha hg\mathscr{J}g_1$, then $\alpha\mathscr{J}g_1$ (in $\AOR_n$). 
Therefore, we also get $\alpha\mathscr{J}g_n$, if $\gd(\alpha)$ is even, 
and $\alpha\mathscr{J}g_1$, if $\gd(\alpha)$ is odd. 

At this point, by showing that $(g_1,g_n)\not\in\mathscr{J}$, 
which can be done similarly to the proof of Proposition \ref{relJ}, this proof is finished.  
\end{proof}

Let us suppose that $n\not\equiv 2\mod 4$. 
Then, by the previous proposition, $Q_{n-1}$ 
is a $\mathscr{J}$-class of $\AOR_n$. Moreover, 
it is easy to conclude that $Q_{n-1}$ has $n$ $\mathscr{L}$-classes (and $\mathscr{R}$-classes) and its  
$\mathscr{H}$-classes have $n-1$ elements. From (\ref{gnhn}), it follows that the maximal groups of $Q_{n-1}$ are 
dihedral groups of order $n-1\,(=2\frac{n-1}{2})$, if $n$ is odd, and cyclic groups of order $n-1$, if $n\equiv 0\mod 4$. 
Furthermore, we have $Q_0<_\mathscr{J}Q_1<_\mathscr{J}\cdots<_\mathscr{J}Q_{n-2}<_\mathscr{J}Q_{n-1}<_\mathscr{J}Q_n$ in $\AOR_n$. 

\smallskip 

On the other hand, suppose that $n\equiv 2\mod 4$. 
In this case, by Proposition \ref{RrelJ}, $\AOR_n$ has two $\mathscr{J}$-classes of elements of rank $n-1$, namely,
$$
Q_{n-1}^\mathscr{o}=\{\alpha\in Q_{n-1}\mid \mbox{$\gd(\alpha)$ is odd}\}
\quad\text{and}\quad 
Q_{n-1}^\mathscr{e}=\{\alpha\in Q_{n-1}\mid \mbox{$\gd(\alpha)$ is even}\}. 
$$
Moreover, $|Q_{n-1}^\mathscr{o}|=|Q_{n-1}^\mathscr{e}|=\frac{1}{2}n^2(n-1)$,  
both $Q_{n-1}^\mathscr{o}$  and $Q_{n-1}^\mathscr{e}$ have $\frac{n}{2}$ $\mathscr{L}$-classes (and $\mathscr{R}$-classes) and 
their $\mathscr{H}$-classes have size $2n-2$ (in particular, by (\ref{gnhn}), their $\mathscr{H}$-classes of idempotents are dihedral groups of order $2(n-1)$). 
Furthermore, it is easy to check that the poset of $\mathscr{J}$-classes of $\AOR_n$ can be represented by the Hasse diagram of Figure \ref{fAOR}. 
\begin{figure}[ht] 
\centering
\begin{tikzpicture}[scale=0.5]
\draw (0,0) node{\fbox{$Q_n$}} ; 
\draw (-2,-1.5) node{\fbox{$Q_{n-1}^\mathscr{o}$}} ; 
\draw (2,-1.5) node{\fbox{$Q_{n-1}^\mathscr{e}$}} ; 
\draw (0,-3) node{\fbox{$Q_{n-2}$}} ; 
\draw (0,-5) node{\fbox{$Q_1$}} ; 
\draw (0,-6.5) node{\fbox{$Q_0$}} ; 
\draw[thin] (0,-.55) -- (-1,-0.89);
\draw[thin] (0,-.55) -- (1,-0.89);
\draw[thin] (0,-2.42) -- (1,-2.11);
\draw[thin] (0,-2.42) -- (-1,-2.11);
\draw[thin,dotted] (0,-3.65) -- (0,-4.35);
\draw[thin] (0,-5.55) -- (0,-5.95);
\end{tikzpicture}
\caption{The Hasse diagram of $\AOR_n/_\mathscr{J}$, for $n\equiv 2\mod 4$.} \label{fAOR}
\end{figure}

\section{The congruences} \label{congruences} 

Our aim in this section is to describe all congruences of the monoids $\AOP_n$ and $\AOR_n$. 

\medskip 

Let $M$ be a monoid and let $I$ be an ideal of $M$. If $u\in I$ and $a\in M$ are such that $J_a\leqslant_\mathscr{J} J_u$, 
then $a\in MuM$ and so $a\in I$. Therefore, if $M/\mathscr{J} =\{J_{a_0}<_\mathscr{J} J_{a_1}<_\mathscr{J}\cdots<_\mathscr{J} J_{a_m}\}$, 
for some $a_0,a_1,\ldots,a_m\in M$, it is easy to conclude that the ideals of $M$ are the following $m+1$ subsets of $M$: 
$$
I_k=J_{a_0}\cup J_{a_1}\cup\cdots\cup J_{a_k}, ~ 0\leqslant k\leqslant m. 
$$
On the other hand, if 
$M/\mathscr{J} =\{J_{a_0}<_\mathscr{J} J_{a_1}<_\mathscr{J}\cdots<_\mathscr{J} J_{a_{m-2}}<_\mathscr{J} J_{a'_{m-1}}, J_{a''_{m-1}}<_\mathscr{J} J_{a_{m}}\}$ 
(with $J_{a'_{m-1}}$ and $J_{a''_{m-1}}$ $\leqslant_\mathscr{J}$-incomparable $\mathscr{J}$-classes), 
for some $a_0,a_1,\ldots,a_{m-2},a'_{m-1},a''_{m-1},a_m\in M$, it is not difficult to deduce that the ideals of $M$ are the following $m+3$ subsets of $M$: 
$$
I_k=J_{a_0}\cup J_{a_1}\cup\cdots\cup J_{a_k}, ~ 0\leqslant k\leqslant m-2, 
$$ 
$I'_{m-1}=I_{m-2}\cup J_{a'_{m-1}}$, $I''_{m-1}=I_{m-2}\cup J_{a''_{m-1}}$, $I_{m-1}=I_{m-2}\cup J_{a'_{m-1}}\cup J_{a''_{m-1}}$ and $I_m=M$. 

\medskip 

Now, recall that 
$$
\AO_n/\mathscr{J}=\left\{J_0^{\AO_n}<_\mathscr{J}J_1^{\AO_n}<_\mathscr{J}\cdots<_\mathscr{J}J_{n-2}^{\AO_n}<_\mathscr{J} 
J_{n-1}^{\mathscr{o},\AO_n}, J_{n-1}^{\mathscr{e},\AO_n} <_\mathscr{J}J_n^{\AO_n}\right\}
$$ 
(with $J_{n-1,\mathscr{o}}^{\AO_n}$ and $J_{n-1,\mathscr{e}}^{\AO_n}$ $\leqslant_\mathscr{J}$-incomparable $\mathscr{J}$-classes), where 
$$
J_k^{\AO_n}=\left\{\alpha\in\POI_n\mid |\im(\alpha)|=k\right\}, ~ 0\leqslant k\leqslant n-2, 
$$
$$
J_{n-1}^{\mathscr{o},\AO_n}=\left\{\alpha\in\POI_n\mid |\im(\alpha)|=n-1 ~\text{and}~ (-1)^{\gd(\alpha)}=(-1)^{\gi(\alpha)}=-1\right\}, 
$$
$$
J_{n-1}^{\mathscr{e},\AO_n}=\left\{\alpha\in\POI_n\mid |\im(\alpha)|=n-1 ~\text{and}~ (-1)^{\gd(\alpha)}=(-1)^{\gi(\alpha)}=1\right\} 
$$
and $J_n^{\AO_n}=\{\id_n\}$. 
Consequently, $\AO_n$ has the following $n+3$ ideals:
$$
I_k^{\AO_n}=\left\{\alpha\in\AO_n\mid |\im(\alpha)|\leqslant k\right\}, ~\text{with}~ 0\leqslant k\leqslant n, 
$$
$I^{\mathscr{o},\AO_n}_{n-1}=I_{n-2}^{\AO_n}\cup J_{n-1}^{\mathscr{o},\AO_n}$ and 
$I^{\mathscr{e},\AO_n}_{n-1}=I_{n-2}^{\AO_n}\cup J_{n-1}^{\mathscr{e},\AO_n}$.  
For more details, see \cite{Fernandes:2024sub}. 

\smallskip 

The following result was proved by Fernandes in \cite{Fernandes:2024sub}. 

\begin{theorem}[{\cite[Theorem 3.3]{Fernandes:2024sub}}] \label{conAO} 
The congruences of the monoid $\AO_n$ are exactly its $n+3$ Rees congruences. 
\end{theorem}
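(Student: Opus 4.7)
The plan is to proceed in two stages. First, the $n+3$ ideals listed above yield $n+3$ pairwise distinct Rees congruences of $\AO_n$; this is immediate, since distinct ideals produce distinct Rees congruences. The substance is the converse: every congruence of $\AO_n$ is a Rees congruence. So I would let $\rho \in \con(\AO_n)$ be non-trivial, set $I_\rho = \{a \in \AO_n : (a, b) \in \rho \text{ for some } b \neq a\}$, and aim to show that $I_\rho$ is one of the $n+3$ listed ideals and that $\rho = \rees{I_\rho}$.

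The crucial structural input is that the maximal subgroups of $\POI_n$, and hence of $\AO_n$, are all trivial, because an order-preserving partial permutation is determined by its domain and image. Thus if $\alpha \rho \beta$ with $\alpha \neq \beta$ in $\AO_n$, then $\alpha$ and $\beta$ cannot be $\mathscr{H}$-related, so $\dom(\alpha) \neq \dom(\beta)$ or $\im(\alpha) \neq \im(\beta)$. Multiplying by partial identities $\id_X$ (which all lie in $\AO_n$) and by the inverses $\alpha^{-1}, \beta^{-1}$, one can translate this discrepancy through the $\leqslant_\mathscr{J}$-order, producing distinct $\rho$-related idempotents $\id_X \rho \id_Y$ with $X \neq Y$. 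Given such a pair, composing $\id_X$ and $\id_Y$ with carefully chosen order-preserving partial permutations in $\AO_n$ should let one deduce that all elements in the $\mathscr{J}$-class of $\alpha$ and in every strictly $\leqslant_\mathscr{J}$-lower $\mathscr{J}$-class are $\rho$-related, forcing $\rho = \rees{I_\rho}$ with $I_\rho$ a union of $\mathscr{J}$-classes.

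The hard part will be the interaction with the two incomparable $\mathscr{J}$-classes $J_{n-1}^{\mathscr{o},\AO_n}$ and $J_{n-1}^{\mathscr{e},\AO_n}$ of rank $n-1$. One must check that a congruence can collapse elements of one of these classes without affecting the other, so that both $I^{\mathscr{o},\AO_n}_{n-1}$ and $I^{\mathscr{e},\AO_n}_{n-1}$ genuinely arise, while a congruence that collapses elements of both, or that identifies $\id_n$ with some other element, must correspond to $I_{n-1}^{\AO_n}$ or to $I_n^{\AO_n}$, respectively. The parity criterion of Lemma \ref{chAO} handles this: the gap parity is preserved under multiplication by elements of $\AO_n$ of rank $\leqslant n-1$, so one cannot pass from $J_{n-1}^{\mathscr{o},\AO_n}$ to $J_{n-1}^{\mathscr{e},\AO_n}$ without going through $\id_n$, which entails total collapse to $\omega$. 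Enumerating the remaining cases for $I_\rho$ then pins it down as exactly one of the $n+3$ listed ideals, completing the proof.
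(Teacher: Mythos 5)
This statement is imported verbatim from \cite[Theorem 3.3]{Fernandes:2024sub} and the present paper gives no proof of it, so there is nothing here to compare your argument against line by line; I can only assess your outline on its own terms, and it is the natural (and, as far as I can tell, correct) argument. The two pillars you identify are the right ones: $\AO_n$ is $\mathscr{H}$-trivial because an order-preserving partial permutation is determined by its domain and image, so any nontrivial pair $\alpha\,\rho\,\beta$ yields, via $\alpha\alpha^{-1}\,\rho\,\beta\beta^{-1}$ and $\alpha^{-1}\alpha\,\rho\,\beta^{-1}\beta$, a pair of distinct $\rho$-related idempotents $\id_X\,\rho\,\id_Y$, whence $\id_{X\cap Y}\,\rho\,\id_Y$ drops the rank and the collapse propagates through the whole ideal below; and the parity criterion of Lemma \ref{chAO} is exactly what makes $J_{n-1}^{\mathscr{o},\AO_n}$ and $J_{n-1}^{\mathscr{e},\AO_n}$ $\leqslant_\mathscr{J}$-incomparable, so that $I_{n-1}^{\mathscr{o},\AO_n}$ and $I_{n-1}^{\mathscr{e},\AO_n}$ are genuine ideals and a congruence may collapse one rank-$(n-1)$ class without the other. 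The one place where your sketch is doing real work that you wave at (``carefully chosen order-preserving partial permutations'') is the translation step at rank $n-1$: to move a relation $\id_Y\,\rho\,\id_Z$ (with $|Y|=n-1$, $Z\subsetneq Y$) to an arbitrary domain $\dom(\gamma)$ with $\gamma\in J_{n-1}^{\mathscr{o},\AO_n}$, the conjugating elements $\transf{\dom(\gamma)\\ Y}$ and $\transf{Y\\ \dom(\gamma)}$ must themselves lie in $\AO_n$, which holds precisely because both gaps involved are odd; this parity check should be made explicit, but it succeeds. Two small imprecisions worth fixing: relating an element of $J_{n-1}^{\mathscr{o},\AO_n}$ to one of $J_{n-1}^{\mathscr{e},\AO_n}$ forces only $\rees{I_{n-1}^{\AO_n}}$, not $\omega$ (only identifying $\id_n$ with a lower element forces $\omega$, since $J_n^{\AO_n}=\{\id_n\}$); and $I_\rho$ being an ideal is a conclusion of the downward-collapse argument, not something to assume at the outset.
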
 

\medskip 

Let $M$ be a finite monoid and let $J$ be a $\mathscr{J}$-class
of $M$. Define 
$$
A(J)=\left\{ a\in M\mid J_a<_\mathscr{J}J \right\}
\quad\text{and}\quad 
B(J)=\left\{ a\in M\mid J\not\leqslant_\mathscr{J}J_a \right\}.
$$
It is a routine matter to show that $A(J)$ and $B(J)$ 
are ideals of $M$ such that $A(J)\subseteq B(J)$ 
(in fact, more precisely, 
$
B(J)=A(J)\cup \left\{ a\in M\mid \mbox{$J$ and $J_a$ are $\leqslant_\mathscr{J}$-incomparable}\right\}
$) and $J\cap A(J)=J\cap B(J)=\emptyset$. 

Now, suppose that $J$ is a regular $\mathscr{J}$-class of $M$ and let $H$ be a group $\mathscr{H}$-class of $M$ contained in $J$. 
Suppose there exists a mapping $\widetilde{\cdot}:J\longrightarrow H$, $x\longmapsto \widetilde{x}$, such that 
\begin{equation}\label{fundcon} 
\mbox{$xy\in J$ implies $\widetilde{xy}=\widetilde{x}\widetilde{y}$, for all $x,y\in J$.} 
\end{equation}
Let $\varrho$ be a congruence of (the group) $H$. 
Let $\contheta{J}^\varrho$ [respectively, $\conpi{J}^\varrho$] be the binary relation on $M$ defined by:
for $x,y\in M$, $x\contheta{J}^\varrho y$ [respectively, $x\conpi{J}^\varrho y$] if and only if
\begin{enumerate}
\item $x=y$; or
\item $x,y\in A(J)$ [respectively, $x,y\in B(J)$]; or
\item $x,y\in J$, $x\mathscr{H}y$ and $\widetilde{x}\varrho\widetilde{y}$.
\end{enumerate}
Notice that, clearly, $\contheta{J}^\varrho\subseteq\conpi{J}^\varrho$. 
Obviously, if $A(J)=B(J)$, then $\contheta{J}^\varrho=\conpi{J}^\varrho$. 

Under these conditions (in fact, even a little more general), Fernandes et al. proved in \cite[Theorem 2.3]{Fernandes&Gomes&Jesus:2009} 
that $\conpi{J}^\varrho$ is a congruence of $M$. With a proof analogous to that of \cite[Theorem 2.3]{Fernandes&Gomes&Jesus:2009}, we can also show that: 

\begin{proposition}\label{contheta} 
Under the above conditions, $\contheta{J}^\varrho$ is a congruence of $M$ such that $\contheta{J}^\varrho\subseteq\conpi{J}^\varrho$. 
\end{proposition}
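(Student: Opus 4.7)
The plan is to mirror the proof of \cite[Theorem 2.3]{Fernandes&Gomes&Jesus:2009}, which shows that $\conpi{J}^\varrho$ is a congruence, substituting $A(J)$ for $B(J)$ throughout. The inclusion $\contheta{J}^\varrho \subseteq \conpi{J}^\varrho$ is immediate from the definitions, since $A(J) \subseteq B(J)$ and the remaining clauses coincide.

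First I would verify that $\contheta{J}^\varrho$ is an equivalence relation. Reflexivity and symmetry are clear. For transitivity, I would split on which of the three disjoint regions $M \setminus (A(J) \cup J)$, $A(J)$, or $J$ contains the intermediate element $y$: outside the first two, $y$ is related only to itself, while in the other two regions transitivity reduces to transitivity of equality, of the ideal membership condition, and of $\mathscr{H}$ together with $\varrho$ on $J$.

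The substantive step is compatibility with multiplication. Let $m \in M$ and $x \contheta{J}^\varrho y$. The cases $x = y$ and $x, y \in A(J)$ are trivial, the latter because $A(J)$ is an ideal. So suppose $x, y \in J$, $x \mathscr{H} y$, and $\widetilde{x} \varrho \widetilde{y}$, and split on whether $mx \in J$. If $mx \notin J$, then $mx \leq_\mathscr{J} x \in J$ and by stability of the finite monoid we have $mx <_\mathscr{J} J$, hence $mx \in A(J)$; writing $y = xu$ with $u \in M^1$ (from $x \mathscr{R} y$) gives $my = mxu \leq_\mathscr{J} mx$, so $my \in A(J)$ as well. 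If $mx \in J$, I would fix an idempotent $e$ in the common $\mathscr{R}$-class of $x$ and $y$, which exists because $J$ is regular; since $ex = x$ and $ey = y$, we have $mex = mx \in J$. The relations $me \leq_\mathscr{R} e \in J$ and $mex \leq_\mathscr{J} me$ pin down $me \in J$, so the hypothesis (\ref{fundcon}) applies to give $\widetilde{mx} = \widetilde{me}\,\widetilde{x}$ and $\widetilde{my} = \widetilde{me}\,\widetilde{y}$; compatibility of $\varrho$ on $H$ then delivers $\widetilde{mx} \varrho \widetilde{my}$. Moreover, $mx \mathscr{L} x \mathscr{L} y \mathscr{L} my$ by stability, and $mx \mathscr{R} my$ follows by multiplying the $M^1$-witnesses of $x \mathscr{R} y$ on the left by $m$, so $mx \mathscr{H} my$. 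Right-compatibility is dual.

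The main obstacle is the case $mx \in J$, where I must slide the multiplication through an idempotent $e \in J$ so that (\ref{fundcon}) can be invoked, and must confirm that both $me \in J$ and the $\mathscr{H}$-relation $mx \mathscr{H} my$ are preserved. Both points follow from the regularity of $J$ and the stability of any finite monoid, and they parallel exactly the corresponding step in \cite[Theorem 2.3]{Fernandes&Gomes&Jesus:2009}.
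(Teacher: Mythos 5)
Your proposal is correct and takes essentially the same route as the paper, which supplies no details of its own and simply asserts that the proof is analogous to that of \cite[Theorem 2.3]{Fernandes&Gomes&Jesus:2009} with $A(J)$ in place of $B(J)$ --- which is exactly what you carry out, including the key step of sliding the multiplier through an idempotent of the $\mathscr{R}$-class so that (\ref{fundcon}) applies. (One harmless slip: $me$ is a left multiple of $e$, so the relevant inequality is $me\leqslant_{\mathscr{L}}e$ rather than $me\leqslant_{\mathscr{R}}e$; your conclusion $me\in J$ only uses $me\leqslant_{\mathscr{J}}e$ together with $J\ni mx=(me)x\leqslant_{\mathscr{J}}me$, so the argument stands.)
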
 

Observe that, if $J$ consists of a single $\mathscr{H}$-class (in particular, if $J$ is the group of units of $M$), the identity application (or any group homomorphism) of $J$ verifies (\ref{fundcon}). Therefore, in this case, if $\varrho$ is a congruence of (the group) $J$, 
the binary relation $\contheta{J}^\varrho$ [respectively, $\conpi{J}^\varrho$] defined on $M$ by 
 $x\contheta{J}^\varrho y$ [respectively, $x\conpi{J}^\varrho y$] if and only if
\begin{enumerate}
\item $x=y$; or
\item $x,y\in A(J)$ [respectively, $x,y\in B(J)$]; or
\item $x,y\in J$ and $x\varrho y$, 
\end{enumerate}
for all $x,y\in M$, is a congruence of $M$. 
In particular, if $J$ is the group of units of $M$, we obtain $\conpi{J}^\varrho\,(=\contheta{J}^\varrho)$ defined by 
$x\conpi{J}^\varrho y$ if and only if either $x,y\in J$ and $x\varrho y$ or $x,y\in M\setminus J$, for all $x,y\in M$. 

\smallskip 

Next, consider two regular $\mathscr{J}$-classes $J$ and $J'$ of $M$ such that $A(J)=A(J')$. 
Let $H$ and $H'$ be group $\mathscr{H}$-classes of $M$ such that $H\subseteq J$ and $H'\subseteq J'$. 
Let $\varrho$ and $\varrho'$ be congruences of (the groups) $H$ and $H'$, respectively. 
Suppose there exist mappings $\widetilde{\cdot}:J\longrightarrow H$, $x\longmapsto \widetilde{x}$, and 
$\widetilde{\cdot}:J'\longrightarrow H'$, $x\longmapsto \widetilde{x}$, verifying (\ref{fundcon}) and 
consider the related congruences $\contheta{J}^\varrho$ and $\contheta{J'}^{\varrho'}$ of $M$. 
Under these conditions, it is a routine matter to check that $\contheta{J}^\varrho\cup\contheta{J'}^{\varrho'}$ is also a transitive binary relation. 
Thus, we immediately have the following result.   

\begin{proposition}\label{cupcon} 
Under the previous conditions, $\contheta{J}^\varrho\cup\contheta{J'}^{\varrho'}$ is a congruence of $M$. 
\end{proposition}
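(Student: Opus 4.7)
The plan is to verify that $\sigma:=\contheta{J}^\varrho\cup\contheta{J'}^{\varrho'}$ is an equivalence relation compatible with the multiplication of $M$. Since each summand is a congruence by Proposition \ref{contheta}, the union is automatically reflexive and symmetric, and compatibility is immediate: if $x\,\sigma\,y$ holds through one of the two summands, then for every $a\in M$ the pairs $(ax,ay)$ and $(xa,ya)$ remain in the \emph{same} summand, hence in $\sigma$. Thus the only non-trivial point, and the one where I expect the real (though still routine) work to lie, is transitivity.

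To handle transitivity, I would take $x\,\sigma\,y$ and $y\,\sigma\,z$ and split cases. If both pairs lie in $\contheta{J}^\varrho$, or both in $\contheta{J'}^{\varrho'}$, the transitivity of that single congruence finishes the job. The crux is therefore the mixed case, say $x\contheta{J}^\varrho y$ and $y\contheta{J'}^{\varrho'}z$, the reverse order being symmetric. The key observation is that the sets $A(J)=A(J')$, $J$ and $J'$ are pairwise disjoint: $J\cap A(J)=\emptyset=J'\cap A(J')$ by construction, and $J\cap J'=\emptyset$ because distinct $\mathscr{J}$-classes are disjoint. Moreover, the defining clauses of each summand that involve the middle element $y$ depend only on which of these three regions (or their complement) contains $y$.

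I would then split on the location of $y$. If $y\in A(J)=A(J')$, neither summand can invoke its third clause (which would require $y\in J$ or $y\in J'$), so each of $x$ and $z$ must either coincide with $y$ or lie in $A(J)$; in every combination $x,z\in A(J)$ and $x\contheta{J}^\varrho z$. If $y\in J$, then on the $\contheta{J'}^{\varrho'}$ side the second and third clauses are both excluded (since $y\notin A(J')$ and $y\notin J'$), forcing $y=z$ and hence $x\contheta{J}^\varrho z$; the case $y\in J'$ is symmetric. Finally, if $y$ lies outside $A(J)\cup J\cup J'$, each relation forces equality, yielding $x=y=z$. In every case $x\,\sigma\,z$, so $\sigma$ is transitive. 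The main obstacle, as advertised, is only the careful bookkeeping in this case analysis; once transitivity is in hand, $\sigma$ is a congruence.
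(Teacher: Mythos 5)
Your proof is correct and follows the same route as the paper, which simply observes that reflexivity, symmetry and compatibility are inherited from the two summands and that transitivity is "a routine matter to check" under the hypothesis $A(J)=A(J')$. Your case analysis on the location of the middle element $y$ (in $A(J)=A(J')$, in $J$, in $J'$, or outside all three) is exactly the routine check the paper has in mind, carried out correctly.
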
 

\medskip 

Now, suppose that $M$ is an inverse submonoid of $\I_n$. 

Let $1\leqslant k\leqslant n$ and let $J$ be a $\mathscr{J}$-class of $M$ such that $J\subseteq J_k^{\I_n}$.  
Let $X$ be a subset of $\Omega_n$ such that $|X|=k$ and let $\varepsilon=\id_X$. 
Suppose two mappings $\widetilde{\cdot}_L:J\longrightarrow J$, $\alpha\longmapsto \widetilde{\alpha}_L$, and 
 $\widetilde{\cdot}_R:J\longrightarrow J$, $\alpha\longmapsto \widetilde{\alpha}_R$, are defined  such that, 
 for $\alpha\in J$, 
 $$
  \widetilde{\alpha}_L=\transf{X\\ \dom(\alpha)}
  \quad\text{and}\quad 
  \widetilde{\alpha}_R=\transf{\im(\alpha)\\ X}. 
 $$
Then, we have:
\begin{enumerate}
\item $\varepsilon=\widetilde{\alpha}_L\widetilde{\alpha}_L^{-1}=\widetilde{\alpha}_R^{-1}\widetilde{\alpha}_R\in J$, 
$\widetilde{\alpha}_L^{-1}\widetilde{\alpha}_L=\alpha\alpha^{-1}$ and $\widetilde{\alpha}_R\widetilde{\alpha}_R^{-1}=\alpha^{-1}\alpha$, 
for $\alpha\in J$; 
\item A mapping $\widetilde{\cdot}:J\longrightarrow H_\varepsilon^M$, $\alpha\longmapsto\widetilde{\alpha}$, 
defined by $\widetilde{\alpha}=\widetilde{\alpha}_L\alpha\widetilde{\alpha}_R$, for $\alpha\in J$. 
\end{enumerate} 

In addition, suppose that the mappings  $\widetilde{\cdot}_L$ and $\widetilde{\cdot}_R$ verify the following properties: 
\begin{enumerate}
\item[$\widetilde{1}$.] For $\alpha,\beta\in J$, if $\alpha\mathscr{R}\beta$, then $\widetilde{\alpha}_L=\widetilde{\beta}_L$; 
\item[$\widetilde{2}$.] For $\alpha,\beta\in J$, if $\alpha\mathscr{L}\beta$, then $\widetilde{\alpha}_R=\widetilde{\beta}_R$; 
\item[$\widetilde{3}$.] For $\alpha\in J$,  $\widetilde{(\alpha^{-1})}_R=\widetilde{\alpha}_L^{-1}$.  
\end{enumerate}
Hence, 
$$
\mbox{$\alpha\beta\in J$ implies $\widetilde{\alpha\beta}=\widetilde{\alpha}\widetilde{\beta}$, for all $\alpha,\beta\in J$.} 
$$
In fact, if $\alpha,\beta\in J$ are such that $\alpha\beta\in J$, then $\im(\alpha)=\dom(\beta)$, i.e. $\alpha\mathscr{L}\beta^{-1}$, 
and $\alpha\mathscr{R}\alpha\beta\mathscr{L}\beta$, whence 
$\widetilde{(\alpha\beta)}_L=\widetilde{\alpha}_L$, $\widetilde{(\alpha\beta)}_R=\widetilde{\beta}_R$ and 
$\widetilde{\alpha}_R=\widetilde{(\beta^{-1})}_R=\widetilde{\beta}_L^{-1}$, from which follows 
$$
\widetilde{\alpha\beta}=\widetilde{(\alpha\beta)}_L\alpha\beta\widetilde{(\alpha\beta)}_R = 
\widetilde{\alpha}_L\alpha(\alpha^{-1}\alpha)\beta\widetilde{\beta}_R =
\widetilde{\alpha}_L\alpha(\widetilde{\alpha}_R\widetilde{\alpha}_R^{-1})\beta\widetilde{\beta}_R =
\widetilde{\alpha}_L\alpha\widetilde{\alpha}_R\widetilde{\beta}_L\beta\widetilde{\beta}_R =\widetilde{\alpha}\widetilde{\beta}. 
$$
Moreover, it is a routine matter to check that, for $\alpha\in J$, 
the mapping $H_\alpha^M\longrightarrow H_\varepsilon^M$, $\gamma\longmapsto\widetilde{\gamma}$, is a bijection with inverse 
$H_\varepsilon^M\longrightarrow H_\alpha^M$, $\sigma\longmapsto\widetilde{\alpha}_L^{-1}\sigma\widetilde{\alpha}_R^{-1}$. 

\medskip 

From now on, suppose that $M\in\{\AOP_n,\AOR_n\}$. 

\smallskip 

Let $1\leqslant k\leqslant n-2$ and consider the  $\mathscr{J}$-class $J=J_k^{\I_n}\cap M$ of $M$.  
Observe that, $J_k^{\POI_n}\subseteq J$. 

Let us define mappings $\widetilde{\cdot}_L:J\longrightarrow J$, $\alpha\longmapsto \widetilde{\alpha}_L$, and 
 $\widetilde{\cdot}_R:J\longrightarrow J$, $\alpha\longmapsto \widetilde{\alpha}_R$, by 
 $$
  \widetilde{\alpha}_L=\transf{\{1,\ldots,k\}\\ \dom(\alpha)}\in\POI_n 
  \quad\text{and}\quad 
  \widetilde{\alpha}_R=\transf{\im(\alpha)\\ \{1,\ldots,k\}}\in\POI_n, 
 $$
 for all $\alpha\in J$. Let $\varepsilon=\id_{\{1,\ldots,k\}}$ and 
 consider also the mapping $\widetilde{\cdot}:J\longrightarrow H_\varepsilon^M$, $\alpha\longmapsto\widetilde{\alpha}$, 
defined by $\widetilde{\alpha}=\widetilde{\alpha}_L\alpha\widetilde{\alpha}_R$, for $\alpha\in J$. 
 It is clear that $\widetilde{\cdot}_L$ and $\widetilde{\cdot}_R$ 
 verify $\widetilde{1}$, $\widetilde{2}$ and $\widetilde{3}$.  

\smallskip 

Next, let $J=J_{n-1}^\mathscr{e}$, with $M=\AOP_n$ and $n$ even, or $J=Q_{n-1}^\mathscr{e}$, with $M=\AOR_n$ and $n\equiv 2\mod 4$.  
Then, for each $\alpha\in J$, we have $\alpha_L,\alpha_R\in J$. Hence, we can consider 
mappings $\widetilde{\cdot}_L:J\longrightarrow J$, $\alpha\longmapsto \widetilde{\alpha}_L$, and 
 $\widetilde{\cdot}_R:J\longrightarrow J$, $\alpha\longmapsto \widetilde{\alpha}_R$, defined by 
 $\widetilde{\alpha}_L=\alpha_L$ and  $\widetilde{\alpha}_R=\alpha_R$,  for all $\alpha\in J$. 
 As in the previous case, it is clear that $\widetilde{\cdot}_L$ and $\widetilde{\cdot}_R$ 
 verify $\widetilde{1}$, $\widetilde{2}$ and $\widetilde{3}$. 
So, let $\varepsilon=\id_{\{1,\ldots,n-1\}}$ and 
$\widetilde{\cdot}:J\longrightarrow H_\varepsilon^M$, $\alpha\longmapsto\widetilde{\alpha}$, 
be the mapping  defined by $\widetilde{\alpha}=\widetilde{\alpha}_L\alpha\widetilde{\alpha}_R$, for $\alpha\in J$. 

\smallskip 

Now, let $J=J_{n-1}^\mathscr{o}$, with $M=\AOP_n$ and $n$ even, or $J=Q_{n-1}^\mathscr{o}$, with $M=\AOR_n$ and $n\equiv 2\mod 4$.  
For each $\alpha\in J$, define 
 $$
  \widetilde{\alpha}_L=\transf{\{2,\ldots,n\}\\ \dom(\alpha)}\in\POI_n 
  \quad\text{and}\quad 
  \widetilde{\alpha}_R=\transf{\im(\alpha)\\ \{2,\ldots,n\}}\in\POI_n. 
 $$
Clearly, $\widetilde{\alpha}_L, \widetilde{\alpha}_R\in J$, for $\alpha\in J$. 
Hence, we have well-defined mappings $\widetilde{\cdot}_L:J\longrightarrow J$, $\alpha\longmapsto \widetilde{\alpha}_L$, 
 $\widetilde{\cdot}_R:J\longrightarrow J$, $\alpha\longmapsto \widetilde{\alpha}_R$, and 
 $\widetilde{\cdot}:J\longrightarrow H_\varepsilon^M$, $\alpha\longmapsto\widetilde{\alpha}$, 
 defined by $\widetilde{\alpha}=\widetilde{\alpha}_L\alpha\widetilde{\alpha}_R$, for $\alpha\in J$, with $\varepsilon=\id_{\{2,\ldots,n\}}$. 
It is also clear that $\widetilde{\cdot}_L$ and $\widetilde{\cdot}_R$ 
 verify $\widetilde{1}$, $\widetilde{2}$ and $\widetilde{3}$.
 
\smallskip

In our next case, for an odd $n$, we take $J=J_{n-1}$, if $M=\AOP_n$, and $J=Q_{n-1}$, if $M=\AOR_n$. 
For each $\alpha\in J$, define 
$$
 \widetilde{\alpha}_L = \left\{\begin{array}{ll}
 \alpha_L & \mbox{if $\gd(\alpha)$ is odd}\\
 g_n^{-1}\alpha_L & \mbox{if $\gd(\alpha)$ is even}
 \end{array}\right. 
 \quad\text{and}\quad
  \widetilde{\alpha}_R = \left\{\begin{array}{ll}
 \alpha_R & \mbox{if $\gi(\alpha)$ is odd}\\
 \alpha_R g_n & \mbox{if $\gi(\alpha)$ is even.}
 \end{array}\right. 
$$

Let $\alpha\in J$. Then, we have 
$\alpha_L\in\AO_n$ [respectively, $\alpha_R\in\AO_n$] if and only if $\gd(\alpha)$ [respectively, $\gi(\alpha)$] is odd. 
On the other hand, as $g_n\not\in\AI_n$, if $\gd(\alpha)$ [respectively, $\gi(\alpha)$] is even, then 
$\overline{g_n^{-1}\alpha_L}=\overline{g_n^{-1}}\overline{\alpha_L}\in\A_n$ 
[respectively, $\overline{\alpha_R g_n}=\overline{\alpha_R}\,\overline{g_n}\in\A_n$] and so 
$g_n^{-1}\alpha_L\in\AOP_n$ [respectively,  $\alpha_R g_n \in\AOP_n$]. 
Therefore, $\widetilde{\alpha}_L ,\widetilde{\alpha}_R\in J$. 

Hence, we have well-defined mappings $\widetilde{\cdot}_L:J\longrightarrow J$, $\alpha\longmapsto \widetilde{\alpha}_L$, 
 $\widetilde{\cdot}_R:J\longrightarrow J$, $\alpha\longmapsto \widetilde{\alpha}_R$, and 
 $\widetilde{\cdot}:J\longrightarrow H_\varepsilon^M$, $\alpha\longmapsto\widetilde{\alpha}$, 
 defined by $\widetilde{\alpha}=\widetilde{\alpha}_L\alpha\widetilde{\alpha}_R$, for $\alpha\in J$, with $\varepsilon=\id_{\{1,\ldots,n-1\}}$. 

Clearly, $\widetilde{\cdot}_L$ and $\widetilde{\cdot}_R$ verify $\widetilde{1}$ and $\widetilde{2}$. 
On the other hand, for $\alpha\in J$, we have 
$$
 \widetilde{(\alpha^{-1})}_R = \left\{\begin{array}{ll}
 (\alpha^{-1})_R & \mbox{if $\gd(\alpha)=\gi(\alpha^{-1})$ is odd}\\
 (\alpha^{-1})_R\, g_n & \mbox{if $\gd(\alpha)=\gi(\alpha^{-1})$ is even}
 \end{array}\right. 
 =\left\{\begin{array}{ll}
 \alpha_L^{-1} & \mbox{if $\gd(\alpha)$ is odd}\\
 \alpha^{-1}_L g_n & \mbox{if $\gd(\alpha)$ is even}
 \end{array}\right. 
=  \widetilde{\alpha}_L^{-1}, 
$$
whence $\widetilde{\cdot}_L$ and $\widetilde{\cdot}_R$ also verify $\widetilde{3}$. 

\smallskip 

Finally, let us take $J=Q_{n-1}$, with $M=\AOR_n$ and $n\equiv 0\mod 4$.  Let $\alpha\in J$. 
In this case, $\alpha_L\in\AO_n$ [respectively, $\alpha_R\in\AO_n$] if and only if $\gd(\alpha)$ [respectively, $\gi(\alpha)$] is even. 
Moreover, as $h_n\not\in\AI_n$,  
if $\gd(\alpha)$ is odd, then 
$\overline{h_n\alpha_L}=\overline{h_n}\overline{\alpha_L}\in\A_n$ and so 
$h_n\alpha_L\in\AOR_n$. Similarly, if $\gi(\alpha)$ is odd, then $\alpha_Rh_n\in\AOR_n$. 

Therefore, we can define mappings $\widetilde{\cdot}_L:J\longrightarrow J$, $\alpha\longmapsto \widetilde{\alpha}_L$, 
 $\widetilde{\cdot}_R:J\longrightarrow J$, $\alpha\longmapsto \widetilde{\alpha}_R$, by 
 $$
 \widetilde{\alpha}_L = \left\{\begin{array}{ll}
 \alpha_L & \mbox{if $\gd(\alpha)$ is even}\\
 h_n\alpha_L & \mbox{if $\gd(\alpha)$ is odd}
 \end{array}\right. 
 \quad\text{and}\quad
  \widetilde{\alpha}_R = \left\{\begin{array}{ll}
 \alpha_R & \mbox{if $\gi(\alpha)$ is even}\\
 \alpha_R h_n & \mbox{if $\gi(\alpha)$ is odd,}
 \end{array}\right. 
$$
for $\alpha\in J$. 
It is clear that $\widetilde{\cdot}_L$ and $\widetilde{\cdot}_R$ verify $\widetilde{1}$ and $\widetilde{2}$. 
In addition, if $\alpha\in J$, then  
$$
 \widetilde{(\alpha^{-1})}_R = \left\{\begin{array}{ll}
 (\alpha^{-1})_R & \mbox{if $\gd(\alpha)=\gi(\alpha^{-1})$ is even}\\
 (\alpha^{-1})_R\, h_n & \mbox{if $\gd(\alpha)=\gi(\alpha^{-1})$ is odd}
 \end{array}\right. 
 =\left\{\begin{array}{ll}
 \alpha_L^{-1} & \mbox{if $\gd(\alpha)$ is even}\\
 \alpha^{-1}_L h_n =(h_n\alpha_L)^{-1}& \mbox{if $\gd(\alpha)$ is odd}
 \end{array}\right. 
=  \widetilde{\alpha}_L^{-1}. 
$$
Hence, $\widetilde{\cdot}_L$ and $\widetilde{\cdot}_R$ also verify $\widetilde{3}$. 
With $\varepsilon=\id_{\{1,\ldots,n-1\}}$, let us also consider the mapping $\widetilde{\cdot}:J\longrightarrow H_\varepsilon^M$, 
$\alpha\longmapsto\widetilde{\alpha}$, defined by $\widetilde{\alpha}=\widetilde{\alpha}_L\alpha\widetilde{\alpha}_R$, for $\alpha\in J$. 

\smallskip 

Let $J$ be any of the $\mathscr{J}$-classes of $M$ considered above.  
Since the mappings $\widetilde{\cdot}_L$ and $\widetilde{\cdot}_R$ verify $\widetilde{1}$, $\widetilde{2}$ and $\widetilde{3}$, 
we get $\widetilde{\alpha\beta}=\widetilde{\alpha}\widetilde{\beta}$, for all $\alpha,\beta\in J$ such that $\alpha\beta\in J$. 
Moreover, for $\alpha\in J$, 
the mappings $H_\alpha^M\longrightarrow H_\varepsilon^M$, 
$\gamma\longmapsto\widetilde{\gamma}=\widetilde{\alpha}_L\gamma\widetilde{\alpha}_R$, and 
$H_\varepsilon^M\longrightarrow H_\alpha^M$, $\sigma\longmapsto\widetilde{\alpha}_L^{-1}\sigma\widetilde{\alpha}_R^{-1}$, 
are mutually inverse bijections. 
In what follows, for each group congruence $\varrho$ of $H_\varepsilon^M$, 
we consider the congruences $\contheta{J}^\varrho$ and $\conpi{J}^\varrho$ of $M$ associated to the (fixed) mapping 
$\widetilde{\cdot}:J\longrightarrow H_\varepsilon^M$, $\alpha\longmapsto\widetilde{\alpha}$. 
Notice that, except for $J\in \{J_{n-1}^\mathscr{o},J_{n-1}^\mathscr{e}\}$, with $M=\AOP_n$ and $n$ even, 
and $J\in \{Q_{n-1}^\mathscr{o},Q_{n-1}^\mathscr{e}\}$, with $M=\AOR_n$ and $n\equiv 2\mod 4$, 
we have  $\contheta{J}^\varrho=\conpi{J}^\varrho$. 

Furthermore, 
if $J$ is the group of units of $M$, in the following, for any (group) congruence $\varrho$ of $J$, 
we consider the congruence $\conpi{J}^\varrho\,(=\contheta{J}^\varrho)$ of $M$ defined by 
$\alpha\conpi{J}^\varrho \beta$ if and only if either $\alpha,\beta\in J$ and $\alpha\varrho \beta$ or $\alpha,\beta\in M\setminus J$, 
for all $\alpha,\beta\in M$. 

\medskip 

We introduce the following notation in order to deal with similar cases simultaneously.  Let 
$$
J_k^M=\left\{\begin{array}{ll}
J_k &\mbox{if $M=\AOP_n$} \\
Q_k &\mbox{if $M=\AOR_n$,} 
\end{array}\right. 
$$
for $0\leqslant k\leqslant n-2$ and $k=n$, 
$$
J_{n-1}^M=\left\{\begin{array}{ll}
J_{n-1} &\mbox{if $M=\AOP_n$ and $n$ is odd} \\
Q_{n-1} &\mbox{if $M=\AOR_n$ and $n\not\equiv 2\mod 4$,} 
\end{array}\right. 
$$
$$
J_{n-1}^{\mathscr{o},M}=\left\{\begin{array}{ll}
J_{n-1}^\mathscr{o} &\mbox{if $M=\AOP_n$ and $n$ is even} \\
Q_{n-1}^\mathscr{o} &\mbox{if $M=\AOR_n$ and $n\equiv 2\mod 4$} 
\end{array}\right. 
$$
and
$$
J_{n-1}^{\mathscr{e},M}=\left\{\begin{array}{ll}
J_{n-1}^\mathscr{e} &\mbox{if $M=\AOP_n$ and $n$ is even} \\
Q_{n-1}^\mathscr{e} &\mbox{if $M=\AOR_n$ and $n\equiv 2\mod 4$.} 
\end{array}\right. 
$$

Now, for $0\leqslant k\leqslant n$, let 
$$
I_k^M=\left\{\alpha\in M\mid |\im(\alpha)|\leqslant k\right\}. 
$$
Then, bearing in mind the observation made at the beginning of this section, 
$I_k^M$ is an ideal of $M$, for $0\leqslant k\leqslant n$. 
Moreover, except for $M=\AOP_n$, with $n$ even, and for $M=\AOR_n$, with $n\equiv 2\mod 4$, 
$I_0^M, I_1^M,\ldots, I_n^M$ are all the ideals of $M$. On the other hand, 
if either $M=\AOP_n$ and $n$ is even or $M=\AOR_n$ and $n\equiv 2\mod 4$, then $M$ has more two ideals, namely 
$$
I^{\mathscr{o},M}_{n-1}=I_{n-2}^M\cup J_{n-1}^{\mathscr{o},M} ~\text{and}~ I^{\mathscr{e},M}_{n-1}=I_{n-2}^M\cup J_{n-1}^{\mathscr{e},M}. 
$$

The following lemmas will be useful to prove the main result of this section. 

\begin{lemma}\label{pori}
Let $\alpha,\beta\in\PORI_n$ be such that $|\im(\alpha)|\geqslant2$, $\alpha\neq\beta$ and $\alpha\mathscr{H}\beta$. 
Let $\varepsilon_{\mathrm{min}}=\id_{\dom(\alpha)\setminus\{\min\dom(\alpha)\}}$ and 
$\varepsilon_{\mathrm{max}}=\id_{\dom(\alpha)\setminus\{\max\dom(\alpha)\}}$. 
Then, $\im(\varepsilon_{\mathrm{min}}\alpha)\neq\im(\varepsilon_{\mathrm{min}}\beta)$ or 
$\im(\varepsilon_{\mathrm{max}}\alpha)\neq\im(\varepsilon_{\mathrm{max}}\beta)$. 
\end{lemma}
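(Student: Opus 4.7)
My approach will be by contradiction: I will assume both $\im(\varepsilon_{\mathrm{min}}\alpha)=\im(\varepsilon_{\mathrm{min}}\beta)$ and $\im(\varepsilon_{\mathrm{max}}\alpha)=\im(\varepsilon_{\mathrm{max}}\beta)$ hold and derive $\alpha=\beta$, contradicting the hypothesis. First, from $\alpha\mathscr{H}\beta$ in $\PORI_n\subseteq\I_n$, I get $\dom(\alpha)=\dom(\beta)=\{a_1<\cdots<a_t\}$ and $\im(\alpha)=\im(\beta)=\{b_1<\cdots<b_t\}$, with $t=|\im(\alpha)|\geqslant 2$. A direct computation shows that $\varepsilon_{\mathrm{min}}\alpha$ is nothing but the restriction of $\alpha$ to $\{a_2,\ldots,a_t\}$, whence $\im(\varepsilon_{\mathrm{min}}\alpha)=\im(\alpha)\setminus\{a_1\alpha\}$; likewise $\im(\varepsilon_{\mathrm{max}}\alpha)=\im(\alpha)\setminus\{a_t\alpha\}$, and the analogous formulas hold for $\beta$. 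Combined with $\im(\alpha)=\im(\beta)$, the two assumed equalities then force $a_1\alpha=a_1\beta$ and $a_t\alpha=a_t\beta$.

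Next, I plan to argue that each element $\gamma$ of the $\mathscr{H}$-class of $\alpha$ in $\PORI_n$ is uniquely determined by the pair $(a_1\gamma,a_t\gamma)$. For $t=2$ this is immediate, since knowing the values of $\gamma$ at both points of its domain trivially determines $\gamma$. For $t\geqslant 3$, I intend to parameterize the $\mathscr{H}$-class of $\alpha$ in $\PORI_n$ explicitly: any orientation-preserving $\gamma$ with the prescribed domain and image must send the ordered sequence $(a_1,\ldots,a_t)$ to a cyclic shift $(b_s,b_{s+1},\ldots,b_{s-1})$ (indices mod $t$), giving $(a_1\gamma,a_t\gamma)=(b_s,b_{s-1})$; whereas any orientation-reversing $\gamma$ produces a reverse cyclic shift, giving $(a_1\gamma,a_t\gamma)=(b_s,b_{s+1})$. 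Since $b_{s-1}\neq b_{s+1}$ whenever $t\geqslant 3$, these $2t$ pairs are pairwise distinct, so the pair $(a_1\gamma,a_t\gamma)$ identifies $\gamma$ within the $\mathscr{H}$-class.

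Applying this uniqueness to $\alpha$ and $\beta$, the equalities $a_1\alpha=a_1\beta$ and $a_t\alpha=a_t\beta$ obtained in the first paragraph force $\alpha=\beta$, contradicting $\alpha\neq\beta$. Therefore at least one of the two image equalities must fail, as required. The main technical step I expect to spend care on is the parameterization of the $\mathscr{H}$-class of $\alpha$ in $\PORI_n$ for $t\geqslant 3$ and the verification that the assignment $\gamma\mapsto(a_1\gamma,a_t\gamma)$ is injective on it; this rests on the known fact that $J_t^{\PORI_n}$ consists of $\mathscr{H}$-classes of size $2t$, split into $t$ orientation-preserving and $t$ orientation-reversing transformations, and that these two families are disjoint precisely because $t\geqslant 3$.
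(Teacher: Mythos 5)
Your proof is correct and follows essentially the same route as the paper: both arguments reduce the hypothesis $\im(\varepsilon_{\mathrm{min}}\alpha)=\im(\varepsilon_{\mathrm{min}}\beta)$ to $a_1\alpha=a_1\beta$ and then use the structure of an $\mathscr{H}$-class of $\PORI_n$ (the $t$ cyclic shifts and $t$ reverse cyclic shifts, which give distinct values at $a_t$ once $t\geqslant3$) to separate $\alpha$ and $\beta$ at $a_t$. The only cosmetic difference is that you phrase it as injectivity of $\gamma\mapsto(a_1\gamma,a_t\gamma)$ and argue by contradiction, while the paper directly exhibits $\{a_p\alpha,a_p\beta\}=\{b_{i-1},b_{i+1}\}$.
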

\begin{proof}
Let us take $\dom(\alpha)=\{a_1<\cdots<a_p\}$ and $\im(\alpha)=\{b_1<\cdots<b_p\}$ ($2\leqslant p\leqslant n$). 
Since $\alpha\mathscr{H}\beta$, we have $\dom(\beta)=\dom(\alpha)$ and $\im(\beta)=\im(\alpha)$. 
If $p=2$, then it is clear that $\im(\varepsilon_{\mathrm{min}}\alpha)\neq\im(\varepsilon_{\mathrm{min}}\beta)$ and
$\im(\varepsilon_{\mathrm{max}}\alpha)\neq\im(\varepsilon_{\mathrm{max}}\beta)$, since $\alpha\neq\beta$.
So, let $p\geqslant3$ and suppose that $\im(\varepsilon_{\mathrm{min}}\alpha)=\im(\varepsilon_{\mathrm{min}}\beta)$.  
Then, $a_1\alpha=a_1\beta=b_i$, for some $1\leqslant i\leqslant p$. 
Since $\alpha\neq\beta$, we must have $\{a_p\alpha,a_p\beta\}=\{b_{i-1},b_{i+1}\}$, with $b_0=b_p$ and $b_{p+1}=b_1$. 
Since $p\geqslant3$, we have $b_{i-1}\neq b_{i+1}$ and so 
$\im(\varepsilon_{\mathrm{max}}\alpha)\neq\im(\varepsilon_{\mathrm{max}}\beta)$, as required. 
\end{proof}

Observe that, for $n\geqslant2$, $0\leqslant k\leqslant n-1$ and $1\leqslant i\leqslant n$, 
it is clear that 
\begin{equation}\label{g_n}
\mbox{$\im(\id_{\Omega_n\setminus\{i\}}g^k)=\id_{\Omega_n\setminus\{i\}}$ if and only if $k=0$.}
\end{equation}
On the other hand, for permutations of the dihedral group, we have: 

\begin{lemma}\label{d_2n}
Let $n\geqslant5$ and let $\sigma\in\D_{2n}$ be such that $\sigma\neq\id_n$. Then: 
\begin{enumerate}
\item $\im(\id_{\Omega_n\setminus\{1\}}\sigma)\neq \Omega_n\setminus\{1\}$ or 
$\im(\id_{\Omega_n\setminus\{3\}}\sigma)\neq \Omega_n\setminus\{3\}$;  
\item $\im(\id_{\Omega_n\setminus\{2\}}\sigma)\neq \Omega_n\setminus\{2\}$ or 
$\im(\id_{\Omega_n\setminus\{n\}}\sigma)\neq \Omega_n\setminus\{n\}$. 
\end{enumerate}
\end{lemma}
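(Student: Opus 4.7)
My plan is first to translate both parts of the conclusion into a statement about fixed points of $\sigma$. Since $\sigma$ is a bijection of $\Omega_n$, one has $\im(\id_{\Omega_n\setminus\{i\}}\sigma) = (\Omega_n\setminus\{i\})\sigma = \Omega_n\setminus\{i\sigma\}$. Hence $\im(\id_{\Omega_n\setminus\{i\}}\sigma)=\Omega_n\setminus\{i\}$ if and only if $i$ is a fixed point of $\sigma$. So the negations of (1) and (2) assert, respectively, that $\sigma$ fixes both $1$ and $3$, and that $\sigma$ fixes both $2$ and $n$. It therefore suffices to show that no $\sigma\in\D_{2n}\setminus\{\id_n\}$ has $\{1,3\}$ or $\{2,n\}$ among its fixed points when $n\geqslant 5$.

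I would then run through the elements of $\D_{2n}=\{g^k,hg^k\mid 0\leqslant k\leqslant n-1\}$. A rotation $g^k$ with $1\leqslant k\leqslant n-1$ has no fixed points at all, since $ig^k\equiv i+k\pmod{n}$ would force $k\equiv 0\pmod{n}$; so no non-trivial rotation can be an obstruction. For reflections, using $ih=n+1-i$ and $ig\equiv i+1\pmod{n}$, a short computation gives $i(hg^k)\equiv n+1-i+k\pmod{n}$, so that $i$ is fixed by $hg^k$ precisely when $2i\equiv k+1\pmod{n}$.

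If $n$ is odd, then $2$ is invertible modulo $n$, so this congruence has a unique solution $i\in\Omega_n$ and $hg^k$ cannot have two distinct fixed points; both (1) and (2) follow immediately. If $n$ is even, the congruence either has no solution (when $k$ is even) or has exactly two solutions in $\{1,\ldots,n\}$, and those two solutions differ by $n/2$. The key observation, which I expect to be the only real obstacle, is that when $\sigma=hg^k$ fixes two prescribed points, these must be precisely that pair of solutions: if $\sigma$ fixes both $1$ and $3$, then $n/2=|3-1|=2$, forcing $n=4$; similarly, if $\sigma$ fixes both $2$ and $n$, then $n/2=n-2$, again forcing $n=4$. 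Since $n\geqslant 5$ in either case, both situations are excluded, completing the proof.
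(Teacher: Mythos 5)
Your proposal is correct and follows essentially the same route as the paper: both reduce the condition $\im(\id_{\Omega_n\setminus\{i\}}\sigma)=\Omega_n\setminus\{i\}$ to the fixed-point condition $i\sigma=i$ and then exploit the explicit action of the elements of $\D_{2n}$. The paper simply pins down the unique non-identity element fixing $1$ (namely $hg$), respectively $2$ (namely $hg^{3}$), and checks that it moves $3$, respectively $n$, while you package the same computation as a statement about the solution set of $2i\equiv k+1 \mod{n}$; the content is identical.
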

\begin{proof}
If $\im(\id_{\Omega_n\setminus\{1\}}\sigma) = \Omega_n\setminus\{1\}$, then $(1)\sigma=1$, 
whence $\sigma=hg$, since $\sigma\neq\id_n$, and so $3\sigma=n-1\geqslant4$, 
which implies that $\im(\id_{\Omega_n\setminus\{3\}}\sigma)\neq \Omega_n\setminus\{3\}$. Similarly, 
if $\im(\id_{\Omega_n\setminus\{2\}}\sigma) = \Omega_n\setminus\{2\}$, then $(2)\sigma=2$, 
whence $\sigma=hg^{3}$, since $\sigma\neq\id_n$, and so $n\sigma=4<n$, 
which implies that $\im(\id_{\Omega_n\setminus\{n\}}\sigma)\neq \Omega_n\setminus\{n\}$, 
as required. 
\end{proof}

We are now in a position to present and demonstrate our description of the congruences of the monoids $\AOP_n$ and $\AOR_n$. 

\begin{theorem}\label{conM}
Let $M\in\{\AOP_n,\AOR_n\}$. Then, $\con(M)$ consists of the universal congruence $\omega$ and all congruences of the form: 
\begin{enumerate}
\item $\conpi{J_k^M}^\varrho$, with $\varrho$ a congruence of the group $H^M_{\id_{\{1,\ldots,k\}}}$, 
for some $1\leqslant k\leqslant n$, 
if either $M=\AOP_n$ and $n$ is odd or $M=\AOR_n$ and $n\not\equiv 2\mod 4$;
 \item $\conpi{J_k^M}^\varrho$, with $\varrho$ a congruence of the group $H^M_{\id_{\{1,\ldots,k\}}}$, 
 for some $1\leqslant k\leqslant n-2$ or $k=n$, 
and  $\conpi{J_{n-1}^{\mathscr{o},M}}^{\varrho_1}$, $\conpi{J_{n-1}^{\mathscr{e},M}}^{\varrho_2}$ and 
 $\contheta{J_{n-1}^{\mathscr{o},M}}^{\varrho_1}\cup \contheta{J_{n-1}^{\mathscr{e},M}}^{\varrho_2}$, 
 with $\varrho_1$ and $\varrho_2$ congruences of the groups $H^M_{\id_{\{2,\ldots,n\}}}$ and $H^M_{\id_{\{1,\ldots,n-1\}}}$, respectively, 
 if either $M=\AOP_n$ and $n$ is even or $M=\AOR_n$ and $n\equiv 2\mod 4$. 
\end{enumerate}
\end{theorem}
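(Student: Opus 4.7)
The plan is to prove both inclusions. The easy direction is to show that each relation listed is a congruence of $M$; the harder direction is to show conversely that every congruence of $M$ appears on the list.

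For the listed relations, each $\mathscr{J}$-class $J$ appearing in the statement comes equipped (from the construction earlier in this section) with a map $\widetilde{\cdot}\colon J \to H_\varepsilon^M$ satisfying the axioms $\widetilde{1}$, $\widetilde{2}$, $\widetilde{3}$, and hence also condition (\ref{fundcon}). Proposition \ref{contheta} therefore makes $\conpi{J}^\varrho$ and $\contheta{J}^\varrho$ congruences of $M$ for every group congruence $\varrho$ on $H_\varepsilon^M$. The group-of-units case $J = J_n^M$ is handled by the remark following Proposition \ref{contheta}. The union $\contheta{J_{n-1}^{\mathscr{o},M}}^{\varrho_1} \cup \contheta{J_{n-1}^{\mathscr{e},M}}^{\varrho_2}$, which only arises when $M$ has two incomparable $\mathscr{J}$-classes of rank $n-1$, is a congruence by Proposition \ref{cupcon}, since these two $\mathscr{J}$-classes satisfy $A(J_{n-1}^{\mathscr{o},M}) = A(J_{n-1}^{\mathscr{e},M}) = I_{n-2}^M$.

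For the converse, I would take an arbitrary $\kappa \in \con(M)$ and set $k = \min\{|\im(\alpha)| : \alpha$ is not alone in its $\kappa$-class$\}$, with $k = +\infty$ meaning $\kappa$ is the identity relation. When $k = +\infty$, since rank-$1$ $\mathscr{H}$-classes are trivial and $B(J_1^M) = \{\emptyset\}$, one verifies that $\kappa = \conpi{J_1^M}^{\Delta}$. Otherwise, pick $(\alpha,\beta) \in \kappa$ with $\alpha \neq \beta$ and $|\im(\alpha)| = k$. Multiplying by $\alpha^{-1}\alpha$, $\alpha\alpha^{-1}$ and by suitable identities $\id_X$, together with the minimality of $k$, will force $\alpha\mathscr{H}\beta$ and $|\im(\beta)| = k$. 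Transporting through the bijections $H_\eta^M \to H_\varepsilon^M$, $\gamma \mapsto \widetilde{\gamma}$, then yields a single group congruence $\varrho$ on $H_\varepsilon^M$ (or a pair $\varrho_1, \varrho_2$ in the split case) which encapsulates every $\kappa$-pair at rank $k$. Matching the behaviour of $\kappa$ on strictly smaller ranks with either $A(J)$, $B(J)$, or the appropriate union will then identify $\kappa$ with one of the listed congruences.

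The main obstacle is the propagation step: I must show that whenever $\kappa$ contains a non-trivial pair $(\alpha',\beta')$ with $\alpha'\mathscr{H}\beta'$ at some rank $k'$, all elements of the relevant smaller ideal collapse into a single $\kappa$-class. This is exactly what Lemmas \ref{pori} and \ref{d_2n} are designed for. Lemma \ref{pori} produces, by left-multiplying $(\alpha',\beta')$ by $\varepsilon_{\mathrm{min}}$ or $\varepsilon_{\mathrm{max}}$, a rank-$(k'-1)$ pair in $\kappa$ whose two components have distinct images, breaking $\mathscr{H}$-equivalence; further multiplication on both sides by elements of the group of units $J_n^M$ (using the explicit generators $g$ and, in the oriented case, $h$) then reaches every $\mathscr{L}$- and $\mathscr{R}$-class at that rank and collapses the ideal. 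Lemma \ref{d_2n} closes the edge case where $(\alpha',\beta')$ lies in $J_n^M$ itself, providing the first descent from the group of units into rank $n-1$ with distinct images. This propagation argument is precisely what distinguishes the three forms $\conpi{J}^\varrho$, $\contheta{J}^\varrho$, and the union form: they correspond to the three possible behaviours of $\kappa$ on the ideal below the rank-$(n-1)$ level in the split case, and to a single option otherwise.
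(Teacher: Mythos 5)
Your first direction (each listed relation is a congruence) is fine and coincides with the paper's, via Propositions \ref{contheta} and \ref{cupcon}. The converse, however, departs from the paper in a way that leaves genuine gaps. The paper's proof is anchored on the reduction to $\AO_n$: it restricts $\rho$ to the submonoid $\AO_n\subseteq M$ and invokes Theorem \ref{conAO} to conclude that this restriction is a Rees congruence $\sim_{I'}$; the ideal $I'$ then hands over, essentially for free, both the collapsed ideal $I$ of $M$ and the fact that any $\rho$-pair outside $I$ is $\mathscr{H}$-related (because the idempotents $\alpha\alpha^{-1},\alpha^{-1}\alpha$ lie in $\AO_n$ and are not identified unless they fall in $I'$). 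You never use this reduction, so you are in effect obliged to re-prove that the idempotent part of $\kappa$ is governed by an ideal, and your sketch of that step does not work as stated.

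Concretely: (i) your parametrisation by the \emph{minimal} rank $k$ of an element in a non-singleton class, followed by the claim that minimality forces $\alpha\mathscr{H}\beta$, is false --- for the Rees congruence $\sim_{I_1^M}$ one has $k=0$, $\alpha=\emptyset$ and $\beta$ of rank $1$, which are not $\mathscr{H}$-related; the relevant parameter is the top of the collapsed ideal, not the bottom. (ii) Your propagation mechanism, "further multiplication on both sides by elements of the group of units reaches every $\mathscr{L}$- and $\mathscr{R}$-class at that rank", fails: the group of units does not act transitively on the idempotents of a fixed rank (for $n$ even the units of $\AOP_n$ form $\langle g^2\rangle$, which preserves the parity of the gap --- this is precisely why the $\mathscr{J}$-level $n-1$ splits into $J_{n-1}^{\mathscr{o}}$ and $J_{n-1}^{\mathscr{e}}$), so collapsing a whole rank level needs more than unit translations. (iii) You have also inverted the role of Lemma \ref{pori}: in the paper it is used, \emph{after} the ideal $I$ is already known, to derive a contradiction showing that no nontrivial pair survives two or more ranks above $I$; it is not a downward-collapse device. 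The downward collapse you need (a non-$\mathscr{H}$ pair at rank $r$ forces the ideal $I_{r-1}$, or the appropriate half-ideal, into one class) is exactly the content the paper imports from Theorem \ref{conAO}, and your proposal neither cites that theorem nor supplies a substitute argument. Until those three points are repaired, the hard inclusion is not established.
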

\begin{proof}
Let $\rho$ be a congruence of $M$. Then, $\rho'=\rho\cap (\AO_n\times\AO_n)$ is a congruence of the monoid $\AO_n$ and so, 
by Theorem \ref{conAO}, $\rho'=\sim_{I'}$, for some ideal $I'$ of $\AO_n$. 
Let 
\begin{description}
\item $I=I_k^M$, if $I'=I_k^{\AO_n}$, for some $0\leqslant k\leqslant n$, 
\item $I=I_{n-1}^M$, if $I'=I^{\mathscr{o},\AO_n}_{n-1}$ or $I'=I^{\mathscr{e},\AO_n}_{n-1}$, 
with either $M=\AOP_n$ and $n$ odd or $M=\AOR_n$ and $n\not\equiv 2\mod 4$, 
\item $I=I^{\mathscr{o},M}_{n-1}$, if $I'=I^{\mathscr{o},\AO_n}_{n-1}$, 
with either $M=\AOP_n$ and $n$ even or $M=\AOR_n$ and $n\equiv 2\mod 4$, and 
\item $I=I^{\mathscr{e},M}_{n-1}$, if $I'=I^{\mathscr{e},\AO_n}_{n-1}$, 
with either $M=\AOP_n$ and $n$ even or $M=\AOR_n$ and $n\equiv 2\mod 4$. 
\end{description}
Observe that, $I'\subseteq I$. 

\smallskip 

Let $\alpha\in M$ be such that $\alpha\mathscr{J}\beta$ (in $M$), for some $\beta\in I'$. 
Then, $\alpha=\gamma\beta\lambda$, for some $\gamma,\lambda\in M$, and $\beta\rho'\emptyset$. 
Hence, $\beta\rho\emptyset$ and so $\alpha=\gamma\beta\lambda\rho\gamma\emptyset\lambda=\emptyset$. 
Thus, $\sim_I\subseteq\rho$. 

Notice that, if $I=I_n^M$, then we can already conclude that $\rho=\omega$. 

\smallskip 

Next, let $\alpha\in M\setminus I$. Let $\beta\in M$ be such that $\alpha\rho\beta$. 
Then, as $M$ is an inverse semigroup, we also have $\alpha^{-1}\rho\beta^{-1}$, 
whence $\alpha\alpha^{-1}\rho\beta\beta^{-1}$ and $\alpha^{-1}\alpha\rho\beta^{-1}\beta$. 
Therefore, $\alpha\alpha^{-1}\rho'\beta\beta^{-1}$ and $\alpha^{-1}\alpha\rho'\beta^{-1}\beta$. 
On the other hand, as $\alpha\not\in I$ and $I'\subseteq I$, we get $\alpha\alpha^{-1},\alpha^{-1}\alpha\not\in I'$. 
Hence, $\alpha\alpha^{-1}=\beta\beta^{-1}$ and $\alpha^{-1}\alpha=\beta^{-1}\beta$, i.e. $\alpha\mathscr{H}\beta$. 

\smallskip 

Let $\alpha\in M$ be such that $1\leqslant |\im(\alpha)|\leqslant n-1$. 
Consider the mutually inverse bijections $H_\alpha^M\longrightarrow H_\varepsilon^M$, 
$\gamma\longmapsto\widetilde{\gamma}=\widetilde{\alpha}_L\gamma\widetilde{\alpha}_R$, and 
$H_\varepsilon^M\longrightarrow H_\alpha^M$, $\sigma\longmapsto\widetilde{\alpha}_L^{-1}\sigma\widetilde{\alpha}_R^{-1}$, 
set out above. 
Let $\varrho=\rho\cap(H_\varepsilon^M\times H_\varepsilon^M)$. 
Let $\beta\in M$ be such that $\alpha\mathscr{H}\beta$.
Then, 
$$
\alpha\rho\beta \quad \Longrightarrow \quad 
\widetilde{\alpha}=\widetilde{\alpha}_L\alpha\widetilde{\alpha}_R \rho \widetilde{\alpha}_L\beta\widetilde{\alpha}_R=\widetilde{\beta} 
\quad \Longrightarrow \quad \alpha = \widetilde{\alpha}_L^{-1}\widetilde{\alpha} \widetilde{\alpha}_R^{-1} \rho 
\widetilde{\alpha}_L^{-1}\widetilde{\beta}\widetilde{\alpha}_R^{-1}=\beta, 
$$
i.e. $\alpha\rho\beta$ if and only if $\widetilde{\alpha}\varrho \widetilde{\beta}$. 

\smallskip 

Now, suppose that $I=I_k^M$, for some $0\leqslant k\leqslant n-2$. 
Let $\alpha,\beta\in M$ be such that $|\im(\alpha)|\geqslant k+2$ and $\alpha\rho\beta$. 
Then, $\alpha\not\in I$ and so, by an above deduction, we have $\alpha\mathscr{H}\beta$. 
By contradiction, assume that $\alpha\neq\beta$. 
Let $\varepsilon_{\mathrm{min}}=\id_{\dom(\alpha)\setminus\{\min\dom(\alpha)\}}, 
\varepsilon_{\mathrm{max}}=\id_{\dom(\alpha)\setminus\{\max\dom(\alpha)\}}\in M$. 
Then, 
$\varepsilon_{\mathrm{min}}\alpha\rho\varepsilon_{\mathrm{min}}\beta$,   
$\varepsilon_{\mathrm{max}}\alpha\rho\varepsilon_{\mathrm{max}}\beta$ 
 and, since $|\im(\varepsilon_{\mathrm{min}}\alpha)|=|\im(\varepsilon_{\mathrm{max}}\alpha)|=k+1$, 
$\varepsilon_{\mathrm{min}}\alpha, \varepsilon_{\mathrm{max}}\alpha\not\in I$, 
whence 
$$
\mbox{$(\varepsilon_{\mathrm{min}}\alpha,\varepsilon_{\mathrm{min}}\beta)\in\mathscr{H}$ and  
$(\varepsilon_{\mathrm{max}}\alpha,\varepsilon_{\mathrm{max}}\beta)\in\mathscr{H}$,} 
$$
again by an above deduction.  
On the other hand, by Lemma \ref{pori}, we have $\im(\varepsilon_{\mathrm{min}}\alpha)\neq\im(\varepsilon_{\mathrm{min}}\beta)$ or 
$\im(\varepsilon_{\mathrm{max}}\alpha)\neq\im(\varepsilon_{\mathrm{max}}\beta)$, and so 
$$
\mbox{$(\varepsilon_{\mathrm{min}}\alpha,\varepsilon_{\mathrm{min}}\beta)\not\in\mathscr{H}$ or 
$(\varepsilon_{\mathrm{max}}\alpha,\varepsilon_{\mathrm{max}}\beta)\not\in\mathscr{H}$,} 
$$
which is a contradiction. 
Thus, $\alpha=\beta$. 

At this point, if  $0\leqslant k\leqslant n-3$ or if $k=n-2$, with either $M=\AOP_n$ and $n$ is odd or $M=\AOR_n$ and $n\not\equiv 2\mod 4$, 
we can conclude that 
$$
\mbox{$\rho=\conpi{J_{k+1}^M}^\varrho$, with $\varrho=\rho\cap(H_{\id_{\{1,\ldots,k+1\}}}^M\times H_{\id_{\{1,\ldots,k+1\}}}^M)$.} 
$$
On the other hand, 
if $k=n-2$, with either $M=\AOP_n$ and $n$ even or $M=\AOR_n$ and $n\equiv 2\mod 4$, we can conclude that 
$$
\mbox{$\rho=\contheta{J_{n-1}^{\mathscr{o},M}}^{\varrho_1} \cup \contheta{J_{n-1}^{\mathscr{e},M}}^{\varrho_2}$, 
with $\varrho_1=\rho\cap(H_{\id_{\{2,\ldots,n\}}}^M\times H_{\id_{\{2,\ldots,n\}}}^M)$
and $\varrho_2=\rho\cap(H_{\id_{\{1,\ldots,n-1\}}}^M\times H_{\id_{\{1,\ldots,n-1\}}}^M)$.}
$$

\smallskip 

Next, suppose that $I=I_{n-1}^M$. Then, clearly, we obtain 
$\rho=\conpi{J_n^M}^\varrho$, with $\varrho=\rho\cap(J_n^M\times J_n^M)$.

\smallskip 

Finally, let us suppose that $I=I^{\mathscr{o},M}_{n-1}$ [respectively, $I=I^{\mathscr{e},M}_{n-1}$]. 
Notice that, in this case, we have either $M=\AOP_n$ and $n$ even or $M=\AOR_n$ and $n\equiv 2\mod 4$. 
Let $\alpha,\beta\in J_n^M$ be such that $\alpha\rho\beta$. 
Let $\sigma=\alpha\beta^{-1}$ and, by contradiction, assume that $\sigma\neq\id_n$. 
Then, by Lemma \ref{d_2n} and by (\ref{g_n}) (this latter only necessary for $\AOP_4$), we have 
$\im(\id_{\Omega_n\setminus\{2\}}\sigma)\neq \Omega_n\setminus\{2\}$ or 
$\im(\id_{\Omega_n\setminus\{n\}}\sigma)\neq \Omega_n\setminus\{n\}$ 
[respectively, 
$\im(\id_{\Omega_n\setminus\{1\}}\sigma)\neq \Omega_n\setminus\{1\}$ or 
$\im(\id_{\Omega_n\setminus\{3\}}\sigma)\neq \Omega_n\setminus\{3\}$]. 
Hence, in particular, 
$$
\mbox{$(\id_{\Omega_n\setminus\{2\}}\sigma,\id_{\Omega_n\setminus\{2\}})\not\in\mathscr{H}$ 
or 
$(\id_{\Omega_n\setminus\{n\}}\sigma,\id_{\Omega_n\setminus\{n\}})\not\in\mathscr{H}$}
$$
[respectively, 
$$
\mbox{$(\id_{\Omega_n\setminus\{1\}}\sigma,\id_{\Omega_n\setminus\{1\}})\not\in\mathscr{H}$ 
or 
$(\id_{\Omega_n\setminus\{3\}}\sigma,\id_{\Omega_n\setminus\{3\}})\not\in\mathscr{H}$].}
$$
On the other hand, as $\alpha\rho\beta$, we get $\sigma=\alpha\beta^{-1}\rho\beta\beta^{-1}=\id_n$, whence 
$\id_{\Omega_n\setminus\{2\}}\sigma\rho\,\id_{\Omega_n\setminus\{2\}}$ 
and 
$\id_{\Omega_n\setminus\{n\}}\sigma\rho\,\id_{\Omega_n\setminus\{n\}}$ 
[respectively, $\id_{\Omega_n\setminus\{1\}}\sigma\rho\,\id_{\Omega_n\setminus\{1\}}$  
and 
$\id_{\Omega_n\setminus\{3\}}\sigma\rho\,\id_{\Omega_n\setminus\{3\}}$]. 
By an above deduction, since $\id_{\Omega_n\setminus\{2\}},\id_{\Omega_n\setminus\{n\}}\not\in I$ 
[respectively, 
$\id_{\Omega_n\setminus\{1\}},\id_{\Omega_n\setminus\{3\}}\not\in I$], we have 
$$
\mbox{$(\id_{\Omega_n\setminus\{2\}}\sigma,\id_{\Omega_n\setminus\{2\}})\in\mathscr{H}$ 
and 
$(\id_{\Omega_n\setminus\{n\}}\sigma,\id_{\Omega_n\setminus\{n\}})\in\mathscr{H}$} 
$$
[respectively, 
$$
\mbox{$(\id_{\Omega_n\setminus\{1\}}\sigma,\id_{\Omega_n\setminus\{1\}})\in\mathscr{H}$ 
and 
$(\id_{\Omega_n\setminus\{3\}}\sigma,\id_{\Omega_n\setminus\{3\}})\in\mathscr{H}$],}
$$ 
which is a contradiction. 
Thus, $\sigma=\id_n$ and so $\alpha=\beta$. 
Therefore, we obtain 
$$
\mbox{$\rho=\conpi{J_{n-1}^{\mathscr{e},M}}^\varrho$, with $\varrho=\rho\cap(H_{\id_{\{1,\ldots,n-1\}}}^M\times H_{\id_{\{1,\ldots,n-1\}}}^M)$} 
$$ 
[respectively, 
$$
\mbox{$\rho=\conpi{J_{n-1}^{\mathscr{o},M}}^\varrho$, with $\varrho=\rho\cap(H_{\id_{\{2,\ldots,n\}}}^M\times H_{\id_{\{2,\ldots,n\}}}^M)$],} 
$$
as required. 
\end{proof}

Notice that,  $\conpi{J_1^M}^\varrho$, with $\varrho$ the identity (and unique) congruence of $H^M_{\id_{\{1\}}}$, 
is the identity congruence of $M$. 
Moreover, with either $M=\AOP_n$ and $n$ even or $M=\AOR_n$ and $n\equiv 2\mod 4$, 
given congruences $\varrho_1$ and $\varrho_2$  of the groups $H^M_{\id_{\{2,\ldots,n\}}}$ and $H^M_{\id_{\{1,\ldots,n-1\}}}$, respectively,
then 
$\contheta{J_{n-1}^{\mathscr{o},M}}^{\varrho_1}\cup \contheta{J_{n-1}^{\mathscr{e},M}}^{\varrho_2}= 
\contheta{J_{n-1}^{\mathscr{e},M}}^{\varrho_2}$, if  $\varrho_1$ is the identity congruence of $H^M_{\id_{\{2,\ldots,n\}}}$, 
and 
$\contheta{J_{n-1}^{\mathscr{o},M}}^{\varrho_1}\cup \contheta{J_{n-1}^{\mathscr{e},M}}^{\varrho_2}= 
\contheta{J_{n-1}^{\mathscr{o},M}}^{\varrho_1}$, if  $\varrho_2$ is the identity congruence of $H^M_{\id_{\{1,\ldots,n-1\}}}$. 

\medskip 

Let $G$ be a cyclic group of order $m$. It is well known that
there exists a one-to-one correspondence between the subgroups of
$G$ and the (positive) divisors of $m$. Therefore, as all subgroups of $G$ are normal, there exists a one-to-one correspondence
between the congruences of $G$ and the (positive) divisors of $n$.
On the other hand, let $G$ be a dihedral group of order $2m$ ($m\geqslant 3$), 
let us say that $G$ is defined by the group presentation 
$
\langle \sigma,\tau \mid \sigma^m=1,\tau^2=1,\tau\sigma=\sigma^{-1}\tau\rangle
$. 
Then, it is also well known that its proper normal subgroups are:
\begin{description}
  \item \-- $\langle \sigma^2,\tau\rangle$, $\langle \sigma^2,\sigma\tau\rangle$ and $\langle \sigma^\frac{m}{p}\rangle$, 
  with $p$ a divisor of $m$, if $m$ is even;
  \item \-- $\langle \sigma^\frac{m}{p}\rangle$, with $p$ a divisor of $m$, if $m$ is odd.
\end{description}
See \cite{Dummit&Foote:1999} for more details.

Thus, since all maximal subgroups of $M$ are cyclic or dihedral, Theorem \ref{conM} gives us a complete characterization of the congruences of the monoid $M$. 

\section{Generators and rank of $\AOP_n$} \label{rAOP} 

We begin this section by recalling the following result from \cite{Fernandes:2000}. 

\begin{lemma}[{\cite[Proposition 3.1]{Fernandes:2000}}]\label{gib} 
Let $\alpha\in\POPI_n$. Then, there exist $0\< i\< n-1$ and $\beta\in\POI_n$ such that $\alpha=g^i\beta$. 
\end{lemma}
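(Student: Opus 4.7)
\bigskip

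\noindent\textbf{Proof plan.} The idea is the standard ``rotate to remove the wrap--around'' argument: if the image sequence of $\alpha$, read in the natural order of $\dom(\alpha)$, has a unique descent, then cyclically shifting the domain by the right power of $g$ kills that descent and turns $\alpha$ into an order-preserving map.

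First, I would dispatch the empty case: if $\dom(\alpha)=\emptyset$ take $i=0$ and $\beta=\emptyset\in\POI_n$. Otherwise write $\dom(\alpha)=\{a_1<\cdots<a_t\}$ and $\im(\alpha)=\{b_1<\cdots<b_t\}$, and use the definition of orientation-preserving to conclude that the sequence $(a_1\alpha,\ldots,a_t\alpha)$ is a cyclic rotation of $(b_1,b_2,\ldots,b_t)$. Either it already coincides with $(b_1,\ldots,b_t)$, in which case $\alpha\in\POI_n$ and we take $i=0$, $\beta=\alpha$; or there is a (unique) descent index $j\in\{1,\ldots,t-1\}$ with $a_{j+1}\alpha=b_1$, and then $a_{j+k}\alpha=b_k$ for $1\<k\< t-j$ and $a_k\alpha=b_{t-j+k}$ for $1\<k\< j$.

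In the descent case I would set $i=n-a_{j+1}+1$; note $a_{j+1}\>2$ (since $a_1<a_{j+1}$), so $1\< i\< n-1$. Define $\beta$ by $\dom(\beta)=g^i(\dom(\alpha))$ and $(ag^i)\beta=a\alpha$ for every $a\in\dom(\alpha)$; then $\alpha=g^i\beta$ holds by construction, since $x(g^i\beta)=(xg^i)\beta$. What remains is a direct computation: $a_{j+1}g^i=1$, $a_{j+k}g^i=a_{j+k}-a_{j+1}+1$ for $1\< k\< t-j$, and $a_k g^i=a_k+n-a_{j+1}+1$ for $1\< k\< j$. Using $a_1<\cdots<a_j<a_{j+1}<\cdots<a_t\< n$, these values are seen to satisfy
$$
1 < a_{j+2}-a_{j+1}+1 < \cdots < a_t-a_{j+1}+1 < a_1+n-a_{j+1}+1 < \cdots < a_j+n-a_{j+1}+1,
$$
and their $\beta$-images in this order are $b_1<b_2<\cdots<b_t$; hence $\beta\in\POI_n$.

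The only potential pitfall is the bookkeeping of the shift $i=n-a_{j+1}+1$ on the two ``halves'' of $\dom(\alpha)$ (the indices above and below $j+1$), making sure the half that wraps past $n$ lands strictly above the half that does not. This is elementary but must be done carefully; once that inequality chain is verified, order-preservation of $\beta$ and the identity $\alpha=g^i\beta$ drop out immediately.
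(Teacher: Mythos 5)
Your argument is correct: the rotation $i=n-a_{j+1}+1$ sending the unique cyclic-descent position to $1$ does turn $\alpha$ into an order-preserving map, and your inequality chain verifying that the wrapped half of the domain lands above the unwrapped half is exactly the point that needs checking. Note that the paper itself gives no proof of this lemma (it is quoted from \cite[Proposition 3.1]{Fernandes:2000}), and your argument is precisely the standard one used there, so there is nothing to add.
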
 

Notice that, under the conditions of the previous lemma, $\beta$ must have the same rank as $\alpha$. 

\smallskip 

Now, let $X_i=\Omega_n\setminus\{i\}$, for $1\leqslant i\leqslant n$, and define $x_1,x_2,\ldots,x_n\in\POI_n$ by  
$$
x_1=\left\{\begin{array}{ll}
\transf{2&\cdots&n\\1&\cdots&n-1}=\transf{X_1\\X_n} & \mbox{if $n$ is odd}\\
\transf{2&\cdots&n-1&n\\1&\cdots&n-2&n}=\transf{X_1\\X_{n-1}} & \mbox{if $n$ is even}
\end{array}\right.,
\quad 
x_2=\left\{\begin{array}{ll}
\transf{1&3&\cdots&n-1&n\\1&2&\cdots&n-2&n}=\transf{X_2\\X_{n-1}} & \mbox{if $n$ is odd}\\
\transf{1&3&\cdots&n\\1&2&\cdots&n-1}=\transf{X_2\\X_n} & \mbox{if $n$ is even}
\end{array}\right.
$$
and
$$
x_i=\transf{1&\cdots&i-3&i-2&i-1&i+1&\cdots&n\\1&\cdots&i-3&i-1&i&i+1&\cdots&n}=\transf{X_i\\X_{i-2}}, ~\mbox{for $3\leqslant i\leqslant n$}. 
$$\label{genAO} 
Then, as proved in \cite{Fernandes:2024sub}, $\{x_1,x_2,\ldots,x_n\}$ is a set of generators of $\AO_n$ with minimum size (and so $\AO_n$ has rank $n$). 
Since $x_1,x_2,\ldots,x_n\in J_{n-1}$, we get $\AO_n\subseteq\langle J_{n-1}\rangle$. On the other hand, by Lemmas \ref{jn-1o} and \ref{jn-1e}, we obtain  
$J_{n-1}\subseteq\langle g,g_n^2\rangle$, if $n$ is odd, and $J_{n-1}\subseteq\langle g^2,g_1,g_n\rangle$, if $n$ is even. 
Furthermore, we have: 

\begin{theorem}\label{genAOP}
If $n$ is odd, then $\{g,g_n^2\}$ is a generating set of $\AOP_n$ with minimum size. 
If $n$ is even, then $\{g^2,g_1,g_n\}$ is a generating set of $\AOP_n$ with minimum size. 
In particular, $\AOP_n$ has rank $2$, if $n$ is odd, and $\AOP_n$ has rank $3$, if $n$ is even.  
\end{theorem}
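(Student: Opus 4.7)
The plan is to establish generation for each parity of $n$ and then derive the matching lower bound from the $\mathscr{J}$-class structure of $\AOP_n$. For the upper bound, I would first assemble the ingredients already prepared: $\AO_n = \langle x_1,\ldots,x_n\rangle \subseteq \langle J_{n-1}\rangle$ since each $x_i$ lies in the rank-$(n-1)$ elements of $\AOP_n$; Lemma \ref{jn-1o} writes $J_{n-1} = \C_n\langle g_n^2\rangle\C_n$ (for $n$ odd) with $\C_n = \langle g\rangle = J_n$, while Lemma \ref{jn-1e} writes $J_{n-1}^\mathscr{o} = \langle g^2\rangle\langle g_1\rangle\langle g^2\rangle$ and $J_{n-1}^\mathscr{e} = \langle g^2\rangle\langle g_n\rangle\langle g^2\rangle$ (for $n$ even) with $J_n = \langle g^2\rangle$. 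Together, these place the top $\mathscr{J}$-classes and $\AO_n$ inside the candidate generated submonoid. The remaining rank-$\leqslant n-2$ elements of $\AOP_n$ lie in $\POPI_n$, so by Lemma \ref{gib} each has the form $\alpha = g^i\beta$ with $\beta \in \POI_n$; since $\rank\beta = \rank\alpha \leqslant n-2$ forces $\beta \in \AI_n\cap\POI_n = \AO_n$, the case $n$ odd is immediate because $g \in \langle g, g_n^2\rangle$. When $n$ is even and $i$ is odd, however, $g^i \notin \AOP_n$, so I would instead write $\alpha = \gamma\cdot\id_{\im\alpha}$ where $\gamma \in J_{n-1}^\mathscr{o} \cup J_{n-1}^\mathscr{e}$ is a rank-$(n-1)$ extension of $\alpha$. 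To produce $\gamma$, pick a gap $d \in \Omega_n\setminus\dom\alpha$ and take $\gamma$ to be the unique orientation-preserving rotation on $\Omega_n\setminus\{d\}$ that agrees with $\alpha$ on $\dom\alpha$ (when it exists); its image gap $\gi(\gamma)$ is then forced, and by Proposition \ref{chAOP}(1) $\gamma \in \AOP_n$ precisely when $d \equiv \gi(\gamma) \pmod 2$. A case analysis on the cyclic structure of $\alpha$ shows that at least one admissible $d$ yields a matched parity, after which $\alpha \in \langle g^2, g_1, g_n\rangle$ follows since $\gamma \in J_{n-1}$ and $\id_{\im\alpha} \in \AO_n$ are both in $\langle g^2, g_1, g_n\rangle$.

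For the lower bound, I would use that multiplication in any submonoid of $\I_n$ cannot increase rank, so every generating set must contain at least one element of rank $n$ to produce $J_n$. For $n$ odd this already forces at least two generators, since a single generator $t$ either lies in $J_n$, in which case $\langle t\rangle \subseteq J_n$ misses the empty map, or has rank $<n$ and misses $J_n$ entirely. For $n$ even, Proposition \ref{relJ}(2) says $J_{n-1}^\mathscr{o}$ and $J_{n-1}^\mathscr{e}$ are $\leqslant_\mathscr{J}$-incomparable, so if a product $u_1\cdots u_m$ lies in $J_{n-1}^\mathscr{o}$, then each factor $u_i$ satisfies $J_{u_i} \geqslant_\mathscr{J} J_{n-1}^\mathscr{o}$, i.e.~$u_i \in J_n \cup J_{n-1}^\mathscr{o}$, and symmetrically for $J_{n-1}^\mathscr{e}$. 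Hence a generating set must include at least one element from each of $J_n$, $J_{n-1}^\mathscr{o}$, and $J_{n-1}^\mathscr{e}$, forcing rank $\geqslant 3$. The explicit sets $\{g, g_n^2\}$ and $\{g^2, g_1, g_n\}$ exhibited above realize these lower bounds.

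The main obstacle is the rank-$\leqslant n-2$ case for $n$ even: the direct route via Lemma \ref{gib} stalls because the factorization $\alpha = g^i\beta$ can force an odd power of $g$ (which fails to belong to $\AOP_n$), and the replacement argument requires the combinatorial construction of an orientation-preserving rank-$(n-1)$ extension $\gamma$ whose gap parities line up as demanded by Proposition \ref{chAOP}(1); everything else in the proof is routine assembly of already-established lemmas and a short $\mathscr{J}$-class counting argument.
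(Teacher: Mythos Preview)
Your lower-bound argument and the odd-$n$ upper bound match the paper's. The genuine gap is in the even-$n$ upper bound for elements of rank $\leqslant n-2$: the rank-$(n-1)$ orientation-preserving extension $\gamma$ of $\alpha$ that you propose need not exist for \emph{any} choice of gap $d$. Take $n=4$ and $\alpha=\transf{1&2\\2&1}\in\AOP_4\setminus\AO_4$. For either $d\in\{3,4\}$, any injective map on $\Omega_4\setminus\{d\}$ extending $\alpha$ sends the ordered domain to $(2,1,x)$ with $x\in\{3,4\}$; the cyclic sequence $(2,1,x)$ then has two descents (at $2\to1$ and at $x\to2$), so no such extension is orientation-preserving. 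Hence the promised ``case analysis on the cyclic structure'' cannot rescue the argument, and the factorisation $\alpha=\gamma\cdot\id_{\im\alpha}$ is unavailable.

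The paper sidesteps this by \emph{not} extending $\alpha$. Instead, it fixes any $i\in\Omega_n\setminus\dom(\alpha)$ and takes $\gamma$ to be the cyclic shift on $\Omega_n\setminus\{i\}$, i.e.\ $\gamma=\transf{j_1&j_2&\cdots&j_{n-1}\\ j_2&j_3&\cdots&j_1}$ where $\{j_1<\cdots<j_{n-1}\}=\Omega_n\setminus\{i\}$. This $\gamma$ has $\gd(\gamma)=\gi(\gamma)=i$, so it lies in $J_{n-1}$ automatically by Proposition~\ref{chAOP}(1), with no parity case analysis needed. A suitable power $\gamma^m$ then rotates $\dom(\alpha)$ so that the residual factor $\beta$ defined by $\alpha=\gamma^m\beta$ is genuinely order-preserving of rank $\leqslant n-2$, hence $\beta\in\AO_n\subseteq\langle J_{n-1}\rangle\subseteq\langle g^2,g_1,g_n\rangle$. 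This is precisely the analogue of Lemma~\ref{gib} with the $n$-cycle $g$ (which is unavailable in $\AOP_n$ for even $n$) replaced by an $(n-1)$-cycle that does live inside $\AOP_n$; that replacement is the missing idea in your sketch.
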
 
\begin{proof}
First, notice that, any generating set of $\AOP_n$ 
must contain at least an element of its group of units and an element of each $\mathscr{J}$-class of elements of rank $n-1$. 
Therefore, $\AOP_n$ has at least rank $2$, if $n$ is odd, and $\AOP_n$ has at least rank $3$, if $n$ is even.  

\smallskip 

Next, suppose that $n$ is an odd number. Then, $J_n=\langle g\rangle$ and, 
by Lemma \ref{gib} and the above observations, we have 
\begin{align*}
\{\alpha\in\AOP_n\mid |\im(\alpha)|\leqslant n-2\}=\{\alpha\in\POPI_n\mid |\im(\alpha)|\leqslant n-2\}
\subseteq\langle g,\{\beta\in\POI_n\mid |\im(\beta)|\leqslant n-2\}\rangle = \\ = 
\langle g,\{\beta\in\AO_n\mid |\im(\beta)|\leqslant n-2\}\rangle \subseteq 
\langle g,x_1,\ldots,x_n\rangle \subseteq 
\langle g,g_n^2\rangle, 
\end{align*}
and so $\AOP_n=\langle g,g_n^2\rangle$. 

\smallskip 

Now, suppose that $n$ is an even number. Then,  $J_n=\langle g^2\rangle$ and, as observed above, $J_{n-1}\subseteq\langle g^2,g_1,g_n\rangle$. 
Thus, let us take $\alpha\in\AOP_n$ such that $|\im(\alpha)|\leqslant n-2$ and $i\in\Omega_n\setminus\dom(\alpha)$. 

If $\alpha\in\AO_n$, as observed above, then $\alpha\in \langle J_{n-1}\rangle\subseteq\langle g^2,g_1,g_n\rangle$. 
So, suppose that $\alpha\in\AOP_n\setminus\AO_n$. 

Let $1\< j_1<j_2<\cdots<j_{n-1}\< n$ be such that $\Omega_n\setminus\{i\}=\{j_1,\ldots,j_{n-1}\}$. 
Let $1\< i_1<i_2<\cdots<i_k\< n-1$ ($2\< k=|\im(\alpha)|\< n-2$) and $1\< r \< k-1$ be such that $\dom(\alpha)=\{j_{i_1},j_{i_2},\ldots,j_{i_k}\}$ 
and $j_{i_r}\alpha > j_{i_{r+1}}\alpha$. 
Then, 
$$
j_{i_{r+1}}\alpha  < \cdots < j_{i_k}\alpha < j_{i_1}\alpha  < \cdots < j_{i_r}\alpha. 
$$

Let 
$$
\gamma=\transf{j_1&j_2&\cdots&j_{n-2}&j_{n-1}\\j_2&j_3&\cdots&j_{n-1}&j_1}.
$$
Then, clearly, $\gamma\in\POPI_n$ and, moreover, $\gamma\in J_{n-1}$ (notice that $\gd(\gamma)=\gi(\gamma)=i$), 
whence $\gamma\in \langle g^2,g_1,g_n\rangle$. Furthermore, 
$$
j_{i_{r+1}}\gamma^{n-1-j_{i_r}}  < \cdots < j_{i_k}\gamma^{n-1-j_{i_r}}  < j_{i_1}\gamma^{n-1-j_{i_r}}  < \cdots <  j_{i_r}\gamma^{n-1-j_{i_r}} 
$$
and so 
$$
\beta = \transf{j_{i_{r+1}}\gamma^{n-1-j_{i_r}} & \cdots & j_{i_k}\gamma^{n-1-j_{i_r}}  & j_{i_1}\gamma^{n-1-j_{i_r}}  & \cdots & j_{i_r}\gamma^{n-1-j_{i_r}} \\
j_{i_{r+1}}\alpha  & \cdots & j_{i_k}\alpha &  j_{i_1}\alpha  & \cdots &  j_{i_r}\alpha} \in \POI_n. 
$$
Since $|\im(\beta)|=|\im(\alpha)|\< n-2$, then $\beta\in\AO_n$ and thus $\beta\in \langle J_{n-1}\rangle\subseteq\langle g^2,g_1,g_n\rangle$. 
In addition, we have $\alpha= \gamma^{n-1-j_{i_r}}\beta$, whence $\alpha\in \langle g^2,g_1,g_n\rangle$. 

Therefore, we conclude that $\AOP_n=\langle g^2,g_1,g_n\rangle$, as required. 
\end{proof} 

\section{Generators and rank of $\AOR_n$}\label{rAOR}

Recall that $h\in\AOR_n$, if $n\equiv0\mod 4$ or $n\equiv1\mod 4$, and $hg\in\AOR_n$, if $n\equiv2\mod 4$. 
Therefore, let us define $h'=h$, if $n\equiv0\mod 4$ or $n\equiv1\mod 4$, and $h'=hg\in\AOR_n$, if $n\equiv2\mod 4$. 
Then, $h'\in\D_{2n}\setminus\C_n$ and so $h'^2=\id_n$.  

\smallskip 

Suppose that $n\not\equiv3\mod 4$ and let $\alpha\in\AOR_n\setminus\AOP_n$. 
Then, $h'\alpha\in\AOP_n$ and $\alpha=h'(h'\alpha)$, whence $\alpha\in\langle h',\AOP_n\rangle$. 
Thus, in view of Theorem \ref{genAOP} and the equality $g_1=hg_n^{n-2}h$, we have 
\begin{equation}\label{gen012}
\AOR_n=\left\{
\begin{array}{ll}
\langle g^2,h,g_n\rangle  & \mbox{if $n\equiv 0\mod 4$}\\
\langle g,h,g_n^2\rangle  & \mbox{if $n\equiv 1\mod 4$}\\
\langle g^2,hg,g_1,g_n\rangle  & \mbox{if $n\equiv 2\mod 4$}. 
\end{array}
\right.
\end{equation} 
Since any generating set of $\AOR_n$ must contain a generating set of its group of units and at least an element of each $\mathscr{J}$-class of elements of rank $n-1$, 
the generating sets presented in (\ref{gen012}) are of minimum cardinality. 

\smallskip 

Next, suppose that $n\equiv3\mod 4$. In this case, we prove below that $\AOR_n=\langle g,g_n^2,h_ng_n\rangle$.

Let $\alpha\in\AOR_n$. If $\alpha\in\AOP_n$, then $\alpha\in \langle g,g_n^2\rangle$, by Theorem \ref{genAOP}, 
whence $\alpha\in \langle g,g_n^2,h_ng_n\rangle$. So, suppose that $\alpha\in\AOR_n\setminus\AOP_n$. 
Take $k\in\Omega_n\setminus\dom(\alpha)$. Then, $n\not\in\dom(g^k\alpha)$ and so 
\begin{equation}\label{hngn2}
(h_ng_n)^2g^k\alpha = \id_{\{1,\ldots,n-1\}}g^k\alpha=g^k\alpha.
\end{equation}
On the other hand, as $\alpha\not\in\AOP_n$, we have $h_ng_ng^k\alpha\in\AOP_n=\langle g,g_n^2\rangle$. 
Hence, by (\ref{hngn2}), we get $g^k\alpha\in\langle g,g_n^2,h_ng_n\rangle$ and so $\alpha\in\langle g,g_n^2,h_ng_n\rangle$. 
Therefore, $\AOR_n=\langle g,g_n^2,h_ng_n\rangle$. 

Now, let $\mathcal{X}$ be a generating set of $\AOR_n$. Then, $\mathcal{X}$ must contain at least an element $Q_n$ and an element $\alpha\in Q_{n-1}\setminus J_{n-1}$. 
Since an element of $\mathcal{X}$ in $Q_n$ can be replaced by $g$, we can assume without loss of generality that $g\in\mathcal{X}$. 

Suppose that $\AOR_n=\langle g,\alpha\rangle$. 
Then, any element of $\AOR_n$ is of the form
$
g^{k_0}\alpha g^{k_1} \cdots \alpha g^{k_r}, 
$
with $r\geqslant0$ and $0\leqslant k_0,k_1,\ldots, k_r\leqslant n-1$. 

Observe that, if $\alpha g^s\alpha\in Q_{n-1}$, for some $0\leqslant s\leqslant n-1$, 
then $\im(g^s\alpha)=\dom(\alpha)$ and so $s$ is (the only element) such that $s \equiv (\gd(\alpha)-\gi(\alpha))\mod n$. 

Let $\beta\in \langle g_n^2\rangle$. Then, we must have $\beta=g^{\gd(\alpha)}\alpha (g^s\alpha)^{2t+1} g^{n-\gi(\alpha)}$, for some $t\geqslant0$. 
As $\dom(g^s\alpha)=\Omega_n\setminus\{\gi(\alpha)\}=\im(g^s\alpha)$, 
i.e. $g^2\alpha$ is a group element of $Q_{n-1}$ that reverses the orientation, then $(g^s\alpha)^2=\id_{\Omega_n\setminus\{\gi(\alpha)\}}$, 
whence $(g^s\alpha)^{2t+1}=g^s\alpha$ and so $\beta = g^{\gd(\alpha)}\alpha g^s\alpha g^{n-\gi(\alpha)}$. 
Therefore,  $\langle g_n^2\rangle=\{\beta\}$, which is a contradiction. 

Thus, $\mathcal{X}$ must include at least a third element and so we conclude that $\{g,g_n^2,h_ng_n\}$ is a generating set of $\AOR_n$ with minimum size.  

To sum up, we have shown that:

\begin{theorem}\label{genAOR}

If $n\equiv 0\mod 4$ {\rm[}respectively, $n\equiv 1\mod 4$, $n\equiv 2\mod 4$, $n\equiv 3\mod 4${\rm]}, 
then $\{ g^2,h,g_n\}$ {\rm[}respectively, $\{ g,h,g_n^2 \}$, $\{ g^2,hg,g_1,g_n \}$, $\{ g,g_n^2,h_ng_n \}${\rm]} 
is a generating set of $\AOR_n$ with minimum size. 
In particular, $\AOR_n$ has rank $4$, if $n\equiv 2\mod 4$, and $\AOR_n$ has rank $3$, if $n\not\equiv 2\mod 4$. 
\end{theorem}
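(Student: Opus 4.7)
The plan is to split the argument into two halves: first establishing that each proposed set generates $\AOR_n$, and then proving the lower bounds on the rank. For the first half, I would treat the cases $n \not\equiv 3 \mod 4$ uniformly using the element $h'$. Namely, since $h' \in \AOR_n$ and $h'^2 = \id_n$, any $\alpha \in \AOR_n \setminus \AOP_n$ satisfies $h'\alpha \in \AOP_n$ and $\alpha = h'(h'\alpha)$, whence $\AOR_n = \langle h', \AOP_n \rangle$. Combining with Theorem \ref{genAOP} and the identity $g_1 = hg_n^{n-2}h$ (which lets us drop $g_1$ from the generating set when $h$ is available, needed for $n \equiv 0 \mod 4$) yields the three sets in (\ref{gen012}).

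The case $n \equiv 3 \mod 4$ is genuinely different because $h \notin \AOR_n$, and I would handle it via $h_ng_n$. The idea is that $(h_ng_n)^2 = \id_{\{1,\ldots,n-1\}}$, so that for any $\alpha \in \AOR_n \setminus \AOP_n$ and any $k \in \Omega_n \setminus \dom(\alpha)$ one has $n \notin \dom(g^k\alpha)$, giving $(h_ng_n)^2 g^k\alpha = g^k\alpha$. Since $h_ng_n \cdot g^k\alpha$ lies in $\AOP_n = \langle g, g_n^2 \rangle$ (by Theorem \ref{genAOP}), both $g^k\alpha$ and hence $\alpha$ belong to $\langle g, g_n^2, h_ng_n \rangle$, proving $\AOR_n = \langle g, g_n^2, h_ng_n \rangle$.

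For the lower bounds, I would use the general observation that any generating set $\mathcal{X}$ of $\AOR_n$ must contain a generating set of the group of units $Q_n$ together with at least one element of each $\mathscr{J}$-class of rank $n-1$. By (\ref{qn}) and Proposition \ref{RrelJ} this immediately gives rank $\geqslant 2 + 1 = 3$ for $n \equiv 0, 1 \mod 4$, rank $\geqslant 2 + 2 = 4$ for $n \equiv 2 \mod 4$, and rank $\geqslant 1 + 1 = 2$ for $n \equiv 3 \mod 4$. The first three match the proposed generating sets; the only gap is showing rank $\geqslant 3$ in the case $n \equiv 3 \mod 4$.

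The main obstacle is exactly this last step. My plan is to argue by contradiction: assume $\AOR_n = \langle g, \alpha \rangle$ with $\alpha \in Q_{n-1} \setminus J_{n-1}$ (the element of $\langle g \rangle = \C_n = Q_n$ in any two-element generating set can be replaced by $g$). Every element of $\AOR_n$ is then of the form $g^{k_0}\alpha g^{k_1}\cdots\alpha g^{k_r}$. An element of $\langle g_n^2 \rangle$ (which lies in $\AOP_n$) must be expressible this way while returning to $\langle g_n^2\rangle$, and the only $s$ for which $\alpha g^s\alpha \in Q_{n-1}$ is $s \equiv \gd(\alpha) - \gi(\alpha) \mod n$. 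Since $g^s\alpha$ is then a group element of $Q_{n-1}$ reversing the orientation, $(g^s\alpha)^2 = \id_{\Omega_n\setminus\{\gi(\alpha)\}}$, so powers collapse and one shows $\langle g_n^2\rangle$ would consist of a single element, a contradiction since $\frac{n-1}{2} \geqslant 3$. This finally forces rank $\AOR_n = 3$ when $n \equiv 3 \mod 4$, completing the proof.
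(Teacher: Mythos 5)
Your proposal is correct and follows essentially the same route as the paper: the reduction $\AOR_n=\langle h',\AOP_n\rangle$ for $n\not\equiv 3\mod 4$ combined with Theorem \ref{genAOP} and $g_1=hg_n^{n-2}h$, the identity $(h_ng_n)^2g^k\alpha=g^k\alpha$ for the case $n\equiv 3\mod 4$, and the same contradiction argument (collapsing powers of $g^s\alpha$ to show $\langle g_n^2\rangle$ would be a singleton) to rule out a two-element generating set. No gaps; this matches the paper's proof.
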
 

\subsection*{Acknowledgment}

This work is funded by national funds through the FCT - Funda\c c\~ao para a Ci\^encia e a Tecnologia, I.P., 
under the scope of the projects UIDB/00297/2020 (https://doi.org/10.54499/UIDB/00297/2020) 
and UIDP/00297/2020 (https://doi.org/10.54499/UIDP/00297/2020) (Center for Mathematics and Applications). 

\subsection*{Declarations} 

The author declares no conflicts of interest.

\end{document}